\newtheorem{thm}{Theorem}
\newtheorem{cor}[thm]{Corollary}
\newtheorem{lemma}[thm]{Lemma}
\newtheorem{prop}[thm]{Proposition}
\newcommand{\R}{\mathbb{R}}
\newcommand{\E}{\mathbb{E}}
\newcommand{\Prob}{\mathbb{P}}
\renewcommand{\P}{\mathbb{P}}
\newcommand{\N}{\mathbb{N}}
\newcommand{\Z}{\mathbb{Z}}
\newcommand{\C}{\mathbb{C}}
\DeclareMathOperator{\tr}{tr}
\newcommand{\abs}[1]{\left\vert #1 \right\vert}
\newcommand{\norm}[1]{\left\Vert #1 \right\Vert}
\newcommand{\eps}{\varepsilon}
\DeclareMathOperator{\Lip}{Lip}
\DeclareMathOperator{\sgn}{sgn}
\newcommand{\Unitary}[1]{\mathbb{U}(#1)}
\newcommand{\SUnitary}[1]{\mathbb{SU}(#1)}
\newcommand{\Orthogonal}[1]{\mathbb{O}(#1)}
\newcommand{\SOrthogonal}[1]{\mathbb{SO}(#1)}
\newcommand{\Symplectic}[1]{\mathbb{S}\mathbbm{p}(2 #1)}
\newcommand{\Circle}{\mathbb{S}^1}
\newcommand{\SOneg}[1]{\mathbb{SO}^{-}(#1)}
\numberwithin{thm}{section}
\numberwithin{equation}{section}
\author{Elizabeth S.\ Meckes and Mark W.\ Meckes}
\address{Department of Mathematics, Case Western Reserve University,
10900 Euclid Ave., Cleveland, Ohio 44106, U.S.A.}
\email{elizabeth.meckes@case.edu}
\address{Department of Mathematics, Case Western Reserve University,
10900 Euclid Ave., Cleveland, Ohio 44106, U.S.A.}
\email{mark.meckes@case.edu}
\title[Spectral measures of random matrices]{Concentration and
  convergence rates \\ for spectral measures of random matrices}
\begin{document}

\begin{abstract}
  The topic of this paper is the typical behavior of the spectral
  measures of large random matrices drawn from several 
  ensembles of interest, including in particular matrices drawn from
  Haar measure on the classical Lie groups, random compressions of
  random Hermitian matrices, and the so-called random sum of two
  independent random matrices.  In each case, we estimate the expected
  Wasserstein distance from the empirical spectral measure to a
  deterministic reference measure, and prove a concentration result
  for that distance.  As a consequence we obtain almost sure
  convergence of the empirical spectral measures in all cases.
\end{abstract}

\maketitle


\section{Introduction}

The topic of this paper is the typical behavior of the spectral
measures of large random matrices drawn from several 
ensembles of interest.  Specifically, we consider random matrices
drawn from Haar measure on the classical Lie groups $\Orthogonal{n}$,
$\SOrthogonal{n}$, $\Unitary{n}$, $\SUnitary{n}$, and
$\Symplectic{n}$; Dyson's circular ensembles; random compressions of
random Hermitian matrices satisfying a concentration hypothesis
(including random Wigner matrices as a special case); and a random
matrix model considered in free probability described by the sum of
two random Hermitian matrices, one of which has been subjected to a
random basis change.  In each case, we estimate the expected
Wasserstein distance from the empirical spectral measure to a
deterministic reference measure, and prove a concentration result for
that distance.  Our bounds are sufficient to obtain almost sure
convergence of the empirical spectral measures (with rates in the
Wasserstein distance) in all cases.

The proofs follow the same approach as the recent work of E.\ Meckes
\cite{EMeckes2} on random projections of high-dimensional probability
measures.  The central idea is to view the Wasserstein distance
$d_1(\mu_M,\mu)$ from the empirical spectral measure of a random
matrix $M$ to a deterministic reference measure $\mu$ as the supremum
of a stochastic process indexed by the unit ball of the
(infinite-dimensional) space $\Lip(\C)$ of real-valued Lipschitz
functions on $\C$.  Concentration properties of the random matrices
considered imply that the stochastic process in question satisfies a
subgaussian increment condition; Dudley's entropy bound together with
approximation arguments are then used to bound the expected supremum
of the process.  In the case of the classical Lie groups, earlier work
by Diaconis and Mallows \cite{diamal}, Diaconis and Shahshahani
\cite{DiSh}, and Rains \cite{Rains-low-powers} is used to show that
the deterministic reference measure can be taken to be the uniform
measure on the circle, and the classical measure concentration results
of Gromov and Milman \cite{GrMi} are used to obtain the needed
concentration properties.  For the Hermitian models, the deterministic
reference measure used is simply the average of the empirical spectral
measure and the matrices are assumed at the outset to satisfy a
concentration hypothesis.

Further history and motivation are discussed in sections
\ref{S:lie-groups} and \ref{S:Hermitian} below; the remainder of this
section is devoted to notation and conventions.

\medskip

For a subset $A \subseteq \C$, the space of Lipschitz functions $f: A
\to \R$ is denoted by $\Lip(A)$, and is equipped with the Lipschitz
seminorm $\abs{\cdot}_{\Lip}$.  Denote by $\mathcal{P}(A)$ the space of
all probability measures supported in $A$, and by $\mathcal{P}_p(A)$
be the space of probability measures in $\mathcal{P}$ with finite
$p$th moment, equipped with the $L_p$ Wasserstein distance $d_p$
defined by
\begin{equation}\label{D:distance_coupling}
  d_p(\mu,\nu) := \inf_{\pi}
  \left(\int \abs{x-y}^p \ d\pi(x,y)\right)^{1/p}.
\end{equation}
The infimum above is over probability measures $\pi$ on $A \times A$
with marginals $\mu$ and $\nu$.  Note that $d_p \le d_q$ when $p \le
q$. The $L_1$ Wasserstein distance can be equivalently defined (see,
e.g., \cite{Du}) by
\begin{equation}\label{D:distance_testfcns}
d_1(\mu,\nu):=\sup_f\int \bigl[f(x)-f(y)\bigr] \ d\mu(x)d\nu(y),
\end{equation}
where the supremum is over $f$ in the unit ball $B(\Lip(A))$ of
$\Lip(A)$. In what follows, ``Wasserstein distance'' with $p$
unspecified refers to $d_1$.

Denote by $\mathcal{M}_n^{sa}$ the space of $n \times n$ Hermitian
matrices, by $\mathcal{N}_n$ the space of $n \times n$ normal
matrices. Denote by $\Unitary{n}$ the group of $n \times n$ unitary
matrices, by $\Orthogonal{n}$ the group of $n \times n$ real
orthogonal matrices, by $\SUnitary{n}$ and $\SOrthogonal{n}$
respectively the special unitary and orthonal groups, and by
$\Symplectic{n} \subseteq \Unitary{2n}$ the compact symplectic group.
In all results below these are understood to be equipped with the
Hilbert--Schmidt norm $\norm{\cdot}_{HS}$.  For any $A\in
\mathcal{N}_n$, let $\mu_A$ denote the spectral distribution of $A$;
that is, if $\{\lambda_i\}_{i=1}^n$ are the eigenvalues of $A$, then
$\mu_A:=\frac{1}{n}\sum_{i=1}^n\delta_{\lambda_i}$.

For $A \in \mathcal{M}_n^{sa}$, denote by $\delta(A) :=
\lambda_{\max}(A) - \lambda_{\min}(A)$ the \emph{spectral diameter} of
$A$.  Note in particular that
\[
\delta(A) = 2 \inf_{\lambda \in \R} \norm{A - \lambda I}_{op},
\]
where $\norm{\cdot}_{op}$ denotes the operator norm.

Throughout Sections \ref{S:lie-groups} and \ref{S:Hermitian}, $c$,
$C$, and similar symbols denote absolute positive constants, whose
exact values may vary from one instance to another.


\bigskip

\section{Random matrices in classical Lie groups}\label{S:lie-groups}

This section is concerned primarily with a random matrix $U$ drawn
according to Haar measure from one of the classical compact Lie groups
$\Orthogonal{n}$, $\SOrthogonal{n}$, $\Unitary{n}$, $\SUnitary{n}$,
and $\Symplectic{n}$.  It will be shown (see Corollary
\ref{T:group-concentration-distance} below) that for fixed $n$, the
empirical spectral measure $\mu_U$ is tightly concentrated near the
uniform measure $\nu$ on $\Circle = \{ z \in \C : \abs{z} = 1\}$, with
mean Wasserstein distance of order at most $n^{-2/3}$ and subgaussian
tail bounds.  As a consequence, it is shown (see Corollary \ref{T:group-BC})
that the Wasserstein distance between $\mu_U$ and $\nu$ is almost
surely of order at most $n^{-2/3}$.  We do not claim that these
results are sharp; in fact, there is reason to suspect that $n^{-2/3}$
could be replaced by $n^{-1}$, up to logarithmic factors.  However, to
the best of our knowledge these are the first results which achieve
any bounds for these quantities.

Random matrices from these groups have been extensively studied, and
much is already known.  In particular, we use results from
\cite{diamal}, \cite{DiSh}, and \cite{Rains-low-powers} below in order
to show that the uniform distribution on the circle is the correct
reference measure for these ensembles.  In the case of the unitary and
special unitary groups $\Unitary{n}$ and $\SUnitary{n}$, large
deviations principles for the empirical spectral measures have been
proved by Hiai and Petz \cite{HiPe-ld} and Hiai, Petz, and Ueda
\cite{HiPeUe}, respectively.  The rates in those LDPs are consistent
with the level of concentration we obtain for the distance, and both
results imply in particular the almost sure convergence of the
spectral measures, although the LDPs do not give information about the
rates of convergence.  It should be noted that almost sure convergence
for random unitary matrices was proved prior to the results of Hiai and
Petz in Voiculescu's paper \cite{Vo}.
As far as we know, almost sure convergence for
the spectral distributions of matrices from the other groups above was
not previously known.

The approach taken in this section has three main steps:
\begin{enumerate}
\item The mean ESD $\mu = \E \mu_U$ approximates $\nu$ in Wasserstein
  distance (Theorem \ref{T:group-avg-distances}). This is shown using
  known moments of $\mu$ and classical results on approximating
  Lipschitz functions on $\Circle$ by polynomials.
\item The mean Wasserstein distance $\E d_1(\mu_U, \mu)$ is small
  (Theorem \ref{T:group-entropy}).  Using definition
  \eqref{D:distance_testfcns}, the Wasserstein distance is interpreted
  as the supremum of a stochastic process indexed by test functions.
  Concentration of measure on the classical Lie groups implies that
  this process has subgaussian increments, allowing the expected
  supremum to be estimated via entropy methods.
\item The Wasserstein distance $d_1(\mu_U, \nu)$ is tightly
  concentrated near its mean (Theorem
  \ref{T:group-concentration-distance}), and almost sure convergence
  of $\mu_U$ --- with the indicated rate in Wasserstein distance ---
  follows from the Borel--Cantelli lemma (Corollary \ref{T:group-BC}).
  This concentration is again shown using concentration of measure on
  the classical Lie groups.
\end{enumerate}

In contrast to the proofs of the LDPs in \cite{HiPe-ld,HiPeUe}, the
proofs here make no use of the joint densities of eigenvalues in the
classical Lie groups.

There is an important technical caveat to the strategy outlined above,
which is that the general concentration of measure results known for
$\SOrthogonal{n}$, $\SUnitary{n}$, and $\Symplectic{n}$ do not extend
to $\Orthogonal{n}$ and $\Unitary{n}$.  The latter two cases will
instead be handled basically by reducing to the corresponding special
groups.  For this purpose it will be useful also to consider Haar
measure on the coset $\SOneg{n} = \{ U \in \Orthogonal{n} : \det U =
-1\}$.  (In this case Haar measure refers to invariance under the
action of $\SOrthogonal{n}$.)

The same strategy can also be carried out for random matrices from
Dyson's Circular Ensembles, as indicated in Theorem \ref{T:circular}.

\medskip

The first step of the plan of this section is achieved in the
following theorem.  Here and in the following, $U \in G$ means that
$U$ is distributed according to Haar measure on the group (or coset)
$G$. Recall that $\nu$ denotes the uniform probability measure on
$\Circle$, and that $\mu = \E \mu_U$.

\begin{thm}\label{T:group-avg-distances}
  \begin{enumerate}
  \item If $U \in \Unitary{n}$ then $\mu = \nu$.

  \item If $U \in \SUnitary{n}$ then $d_1 (\mu, \nu) \le \frac{C}{n}$.

  \item If $U \in \SOrthogonal{n}$, $\Orthogonal{n}$, $\SOneg{n}$, or
    $\Symplectic{n}$, then $d_1(\mu, \nu) \le C \frac{\log n}{n}$.
  \end{enumerate}
\end{thm}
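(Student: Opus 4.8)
The plan is to use the dual formulation \eqref{D:distance_testfcns}: to bound $d_1(\mu,\nu)$ it suffices to control $\int f\,d\mu - \int f\,d\nu$ uniformly over $f$ in the unit ball of $\Lip(\Circle)$. Since $\mu$ and $\nu$ are both supported on $\Circle$, I would pass to Fourier analysis on the circle. By the classical Jackson-type approximation theorem, every $f \in B(\Lip(\Circle))$ can be approximated uniformly to within $O(1/k)$ by a trigonometric polynomial $p$ of degree at most $k$; moreover one may keep $\abs{p}_{\Lip}$ bounded by an absolute constant. Thus
\[
\left\lvert \int f\,d\mu - \int f\,d\nu \right\rvert \le 2\norm{f-p}_\infty + \left\lvert \int p\,d\mu - \int p\,d\nu\right\rvert \le \frac{C}{k} + \sum_{1 \le \abs{j} \le k} \abs{\widehat{p}(j)}\,\abs{\E \tr(U^j)}\Big/ n,
\]
using that $\int z^j\,d\nu(z) = 0$ for $j \ne 0$ while $\int z^j\,d\mu(z) = \frac{1}{n}\E\tr(U^j)$ by definition of $\mu = \E\mu_U$. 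Since $\abs{\widehat{p}(j)} \le C\abs{p}_{\Lip}/\abs{j} \le C/\abs{j}$ for a degree-$k$ polynomial with bounded Lipschitz norm (integrating by parts the Fourier coefficient), the second term is at most $\frac{C}{n}\sum_{1 \le \abs{j} \le k} \frac{1}{\abs{j}} \abs{\E\tr(U^j)}$.

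The heart of the matter is then the input on the mixed moments $\E\tr(U^j)$. For $U \in \Unitary{n}$, the Diaconis--Shahshahani result \cite{DiSh} gives $\E\tr(U^j) = 0$ for $1 \le \abs{j} \le n$, which makes the second term vanish entirely for $k = n$ and yields $d_1(\mu,\nu) \le C/n$; but in fact exact orthogonality of $\{z^j\}$ against $\mu$ for all $j$ forces $\mu = \nu$, giving part (1). For $\SUnitary{n}$ one uses the formula (from \cite{Rains-low-powers} or \cite{diamal}) that $\E\tr(U^j)$ vanishes for $1 \le \abs{j} \le n-1$ and equals a bounded quantity (in fact $\pm 1$-type corrections from the determinant constraint) for $\abs{j}$ a multiple of $n$; summing $\frac{1}{n}\sum_j \frac{1}{\abs{j}}$ over such $j \le k$ contributes $O(1/n)$ once $k \asymp n$, and the approximation error $C/k$ is also $O(1/n)$. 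For $\SOrthogonal{n}$, $\Orthogonal{n}$, $\SOneg{n}$, and $\Symplectic{n}$ the relevant moment computations from \cite{diamal} and \cite{DiSh} give $\abs{\E\tr(U^j)}$ bounded by an absolute constant (indeed, for small $j$ one gets $\E\tr(U^j) \in \{0,\pm 1\}$ asymptotically, while for $j$ comparable to $n$ there can be an $O(j)$ contribution; one must track the precise range). Taking $k \asymp n$, the approximation term contributes $C/n$, and the moment sum contributes $\frac{C}{n}\sum_{1 \le j \le k}\frac{1}{j} \le \frac{C\log n}{n}$, producing the claimed rate.

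The main obstacle is bookkeeping the moment estimates: one needs, for each group, a clean statement of the form ``$\abs{\E\tr(U^j)} \le C$ for $1 \le j \le cn$ (and suitably controlled, e.g. $O(j/n \cdot n) = O(j)$ or better, for larger $j$),'' extracted correctly from \cite{diamal,DiSh,Rains-low-powers}, with attention to the real/quaternionic cases where $\E\tr(U^j)$ need not vanish even for moderate $j$. A secondary technical point is confirming the Jackson-type bound with control of both $\norm{f-p}_\infty$ and $\abs{p}_{\Lip}$ simultaneously — this is standard (de la Vallée Poussin or Fejér-type kernels work), but it is what lets us convert the $\ell^1$ bound on Fourier coefficients $\abs{\widehat p(j)} \le C/\abs{j}$ into the final estimate. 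Once these two ingredients are in place the three parts follow by choosing $k$ of order $n$ and summing.
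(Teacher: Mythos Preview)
Your approach is essentially the paper's: use the dual formulation of $d_1$, approximate $f$ by a trigonometric polynomial, and feed in the known bounds $\abs{\E\tr(U^j)} \le 1$ for $\abs{j} < n$ together with $\abs{\widehat f(j)} \le C/\abs{j}$. The paper streamlines parts (1) and (2) by observing rotation invariance directly---for $\Unitary{n}$, $\omega U$ is Haar for every $\omega \in \Circle$, so $\mu$ is rotation-invariant and hence equals $\nu$; for $\SUnitary{n}$, $e^{2\pi i/n}I_n \in \SUnitary{n}$ forces $\E\tr U^k = 0$ for $1 \le \abs{k} < n$ without citing any external moment computations---and in part (3) it uses the partial Fourier sum $S_n$ of $f$ itself (controlling $\norm{f - S_n}_\infty$ via Lebesgue's theorem plus Jackson), which sidesteps your need for an approximant with simultaneously bounded Lipschitz norm.
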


\begin{proof}
  \begin{enumerate}
  \item For any fixed $\omega \in \Circle$, $\omega U$ is also
    Haar-distributed in $\Unitary{n}$. Therefore $\mu$ is a
    rotation-invariant probability measure on $\Circle$, hence equal
    to $\nu$.

  \item Observe first that $e^{2\pi i /n} I_n \in \SUnitary{n}$, and
    so $e^{2\pi i /n}U$ is Haar-distributed in $\SUnitary{n}$. Thus
    for any integer $k$,
    \[
    \E \tr U^k = \E \tr (e^{2\pi i /n} U)^k = e^{2\pi i k/n} \E \tr U^k.
    \]
    Therefore $\E \tr U^k = 0$ for $1 \le \abs{k} < n$. If $g(z) =
    \sum_{\abs{k}<n} a_k z^k$ is a trigonometric polynomial on
    $\Circle$, it follows that
    \[
    \int g \ d \mu = \E \int g \ d\mu_U 
    = \frac{1}{n} \sum_{\abs{k}<n} a_k \E \tr U^k = a_0 = \int g \ d\nu.
    \]
    Now given $f:\Circle \to \R$ which is $1$-Lipschitz, Jackson's
    theorem (see, e.g.\ \cite[Theorem 1.4]{Rivlin}) implies that there
    is such a polynomial $g$ such that $\norm{f - g}_\infty \le
    \frac{C}{n}$. Thus
    \begin{align*}
    \abs{\int f \ d\mu - \int f \ d\nu} & \le
    \abs{\int f \ d\mu - \int g \ d\mu} 
    + \abs{\int g \ d\nu - \int f \ d\nu} \\
      & \le 2 \norm{f - g}_\infty
      \le \frac{2C}{n}.
    \end{align*}

  \item By results of Diaconis and Mallows (see \cite{diamal}),
    Diaconis and Shahshahani \cite{DiSh}, and Rains
    \cite{Rains-low-powers}, in each of these cases $\abs{\E \tr U^k}
    \le 1$ for $1 \le \abs{k} < n$.

    Given $f:\Circle \to \R$ which is $1$-Lipschitz, it is easy to
    check that $\abs{\widehat{f}(k)} \le \frac{C}{k}$ for $\abs{k} \ge
    1$ (see, e.g., Theorem 4.6 of \cite{Katz}).  If
    \[
    S_n(z) = \sum_{k=-(n-1)}^{n-1} \widehat{f}(k) z^k,
    \]
    then
    \begin{align*}
      \abs{\int S_n \ d\mu - \int S_n \ d\nu} & =
      \abs{\frac{1}{n} \sum_{1 \le \abs{k} \le n-1} \widehat{f}(k) 
        \E \tr U^k} 
      \le \frac{C}{n} \sum_{1 \le k \le n-1} \frac{1}{k}
      \le C \frac{\log n}{n}.
    \end{align*}
    A theorem of Lebesgue (see, e.g., \cite[Theorem 2.2]{Rivlin})
    implies that
    \[
    \norm{f - S_n}_\infty \le C (\log n) \inf_g \norm{f - g}_\infty,
    \]
    where the infimum is over all trigonometric polynomials $g(z) =
    \sum_{\abs{k}<n} a_k z^k$.  Combined with Jackson's theorem
    \cite[Theorem 1.4]{Rivlin} this implies that $\norm{f -
      S_n}_\infty \le C \frac{\log n}{n}$, and thus
    \begin{align*}
      \abs{\int f \ d\mu - \int f \ d\nu} & \le \abs{\int f \ d\mu -
        \int S_n \ d\mu}
      + \abs{\int S_n \ d\mu - \int S_n \ d\nu} \\
      & \quad + \abs{\int S_n \ d\nu - \int f d\nu} \\
      & \le C \frac{\log n}{n}. \qedhere
    \end{align*}
  \end{enumerate}
\end{proof}

\medskip

The second and third steps of the plan of this section rely on the
following concentration of measure property.  This essentially follows
from a general isoperimetric inequality for Riemannian manifolds due
to Gromov and Milman \cite{GrMi} and calculations of the Ricci
curvature of the classical Lie groups (for which see \cite[Appendix
F]{AnGuZe}). In the precise form stated it follows from a result of
Bakry and \'Emery \cite{BaEm} which shows that the same Ricci
curvature bounds imply a logarithmic Sobolev inequality, which in turn
implies such a concentration inequality (cf.\ \cite[Chapter
5]{Ledoux}).

\begin{prop}[See {\cite[Theorem 4.4.27]{AnGuZe}}] \label{T:GM} Let $G$
  be one of $\SOrthogonal{n}$, $\SOneg{n}$, $\SUnitary{n}$, or
  $\Symplectic{n}$. Let $F: G \to \R$ be $1$-Lipschitz with respect to
  the geodesic metric (induced by the standard embedding in matrix
  space with the Hilbert--Schmidt norm). If $U \in G$, then
  \[
  \Prob\bigl[F(U) - \E F(U) \ge t \bigr] \le  e^{- c n t^2}
  \]
  for every $t > 0$.
\end{prop}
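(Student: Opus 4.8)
The plan is to deduce the stated subgaussian bound from a logarithmic Sobolev inequality (LSI) for Haar measure on $G$ with constant of order $1/n$, which in turn follows from a uniform lower bound of order $n$ on the Ricci curvature of $G$ via the Bakry--\'Emery criterion. The three ingredients are thus a curvature computation, the Bakry--\'Emery implication ``curvature $\Rightarrow$ LSI'', and the Herbst argument ``LSI $\Rightarrow$ subgaussian concentration''; the coset $\SOneg{n}$ will need one extra transfer step.

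First I would record the curvature bound. Each of $\SOrthogonal{n}$, $\SUnitary{n}$, $\Symplectic{n}$ is a compact Lie group carrying the bi-invariant metric determined by the Hilbert--Schmidt inner product on its Lie algebra. For any bi-invariant metric the Levi--Civita connection is $\nabla_X Y = \tfrac12[X,Y]$ and the Ricci tensor is $-\tfrac14$ times the Killing form (a formula not depending on the particular bi-invariant metric); plugging in the known Killing forms of $\mathfrak{so}(n)$, $\mathfrak{su}(n)$, and $\mathfrak{sp}(n)$ gives $\mathrm{Ric}(X,X)\ge c\,n\,\norm{X}_{HS}^2$ for an absolute $c>0$ and all $n$ at least some absolute $n_0$ (the finitely many remaining cases, essentially the circle $\SOrthogonal{2}$, satisfying the conclusion classically); this curvature calculation is precisely the content of \cite[Appendix F]{AnGuZe}. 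The Bakry--\'Emery criterion (see \cite{BaEm}, or \cite[Chapter 5]{Ledoux}) then shows that on a compact Riemannian manifold with $\mathrm{Ric}\ge\rho>0$ the normalized Riemannian volume --- which here is exactly Haar measure --- satisfies the LSI $\Ent(f^2)\le\tfrac{2}{\rho}\,\E\abs{\nabla f}^2$, so Haar measure on $G$ satisfies an LSI with constant $\kappa\le C/n$.

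Next I would run the Herbst argument. If $F:G\to\R$ is $1$-Lipschitz with respect to the geodesic metric, then $\abs{\nabla F}\le 1$ almost everywhere; applying the LSI to $f=e^{\lambda F/2}$ and rearranging gives the differential inequality $\frac{d}{d\lambda}\bigl(\tfrac1\lambda\log\E e^{\lambda(F-\E F)}\bigr)\le\tfrac{\kappa}{2}$ for $\lambda>0$, and integrating from $0$ yields $\E e^{\lambda(F-\E F)}\le e^{\kappa\lambda^2/2}\le e^{C\lambda^2/(2n)}$ for all $\lambda>0$. Markov's inequality and optimization over $\lambda$ then give $\Prob\bigl[F(U)-\E F(U)\ge t\bigr]\le e^{-cnt^2}$, which is the claim for the three groups. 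Finally, for $\SOneg{n}$: fixing any $U_0\in\SOneg{n}$, left translation $V\mapsto U_0V$ is an isometry of $\Orthogonal{n}$ in the geodesic metric that carries $\SOrthogonal{n}$ onto $\SOneg{n}$ and pushes Haar measure on $\SOrthogonal{n}$ forward to Haar measure (in the stated $\SOrthogonal{n}$-invariant sense) on $\SOneg{n}$; composing $F$ with this isometry reduces the $\SOneg{n}$ case to the $\SOrthogonal{n}$ case already handled.

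The main obstacle is the curvature input: one must verify that the particular bi-invariant metric coming from the Hilbert--Schmidt norm --- rather than some other natural normalization --- has Ricci curvature growing linearly in $n$ with an absolute constant, and that this holds uniformly over all three families. There is also a minor point to keep straight: the hypothesis that $F$ be Lipschitz in the geodesic metric is weaker than Lipschitz in the Hilbert--Schmidt (chordal) metric, since the geodesic distance dominates the chordal one, and it is the geodesic version that the LSI naturally produces, because concentration is governed by the Riemannian gradient $\abs{\nabla F}$. Once the curvature bound is in hand the Bakry--\'Emery and Herbst steps are entirely standard; indeed the whole statement is \cite[Theorem 4.4.27]{AnGuZe}.
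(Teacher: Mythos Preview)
Your proposal is correct and follows exactly the route the paper itself sketches in the paragraph preceding the proposition: Ricci curvature bounds for the classical groups (from \cite[Appendix F]{AnGuZe}), the Bakry--\'Emery implication to a log-Sobolev inequality, and then the Herbst argument (cf.\ \cite[Chapter 5]{Ledoux}) to obtain subgaussian concentration. The paper does not give a self-contained proof but simply points to these ingredients and to \cite[Theorem 4.4.27]{AnGuZe}; your write-up fills in the same outline with appropriate detail, including the transfer to $\SOneg{n}$ via an isometric translation from $\SOrthogonal{n}$, which the paper leaves implicit.
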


The geodesic metric on $G$ dominates the Hilbert--Schmidt metric on
matrix space, so the conclusion of Proposition \ref{T:GM} applies in
particular to $F$ which is $1$-Lipschitz with respect to the
Hilbert--Schmidt metric.

The following lemma provides the necessary Lipschitz estimates for the
functions to which the concentration property will be applied in this
and the subsequent section.

\begin{lemma}\label{T:Lipschitz-estimates}
  The map $A\mapsto\mu_A$ from $\mathcal{N}_n$ to $\mathcal{P}_1(\C)$
  taking a normal matrix to its spectral measure is
  $n^{-1/2}$-Lipschitz. Furthermore, if $\rho \in \mathcal{P}_1(\C)$
  is any fixed probability measure, the following statements hold.
  \begin{enumerate}
  \item \label{X_f-lipschitz} For any $1$-Lipschitz function
    $f:\C\to\R$, 
    the function
    \[
    A \mapsto \int f \ d\mu_A - \int f \ d\rho
    \]
    is $n^{1/2}$-Lipschitz. 
  \item \label{d_1-lipschitz} The map $A \mapsto d_1(\mu_A,\rho)$ is
    $n^{1/2}$-Lipschitz.
\end{enumerate}
\end{lemma}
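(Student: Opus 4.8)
The plan is to reduce all three assertions to one classical ingredient, the Hoffman--Wielandt inequality for normal matrices: for any $A, B \in \mathcal{N}_n$, with the eigenvalues $\lambda_1(A), \dots, \lambda_n(A)$ and $\lambda_1(B), \dots, \lambda_n(B)$ listed with multiplicity, there is a permutation $\sigma$ of $\{1, \dots, n\}$ with
\[
\sum_{i=1}^n \abs{\lambda_i(A) - \lambda_{\sigma(i)}(B)}^2 \le \norm{A - B}_{HS}^2
\]
(see, e.g., Bhatia, \emph{Matrix Analysis}; it is the normal --- not merely Hermitian --- version that is needed for the unitary, orthogonal and symplectic applications). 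For such a $\sigma$, the measure $\frac1n \sum_{i=1}^n \delta_{(\lambda_i(A),\, \lambda_{\sigma(i)}(B))}$ on $\C \times \C$ has marginals $\mu_A$ and $\mu_B$, hence is an admissible coupling in \eqref{D:distance_coupling}; together with $d_1 \le d_2$ this gives
\[
d_1(\mu_A, \mu_B) \le d_2(\mu_A, \mu_B) \le \left( \frac1n \sum_{i=1}^n \abs{\lambda_i(A) - \lambda_{\sigma(i)}(B)}^2 \right)^{1/2} \le \frac{1}{\sqrt n}\, \norm{A - B}_{HS},
\]
which is exactly the assertion that $A \mapsto \mu_A$ is $\tfrac{1}{\sqrt n}$-Lipschitz from $(\mathcal{N}_n, \norm{\cdot}_{HS})$ to $(\mathcal{P}_1(\C), d_1)$.

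Parts \eqref{X_f-lipschitz} and \eqref{d_1-lipschitz} then follow formally. For \eqref{X_f-lipschitz}, fix a $1$-Lipschitz $f$; on forming the difference of the values at $A$ and at $B$ the $\rho$-terms cancel, and applying the dual formula \eqref{D:distance_testfcns} for $d_1$ to $\pm f$ gives
\[
\abs{\left( \int f\, d\mu_A - \int f\, d\rho \right) - \left( \int f\, d\mu_B - \int f\, d\rho \right)} = \abs{\int f\, d\mu_A - \int f\, d\mu_B} \le d_1(\mu_A, \mu_B) \le \frac{1}{\sqrt n}\norm{A - B}_{HS}.
\]
For \eqref{d_1-lipschitz}, the triangle inequality for $d_1$ on $\mathcal{P}_1(\C)$ gives $\abs{d_1(\mu_A, \rho) - d_1(\mu_B, \rho)} \le d_1(\mu_A, \mu_B) \le \tfrac{1}{\sqrt n}\norm{A - B}_{HS}$. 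Thus both maps are in fact $\tfrac{1}{\sqrt n}$-Lipschitz, which yields the asserted bounds.

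I do not expect a genuine obstacle here: essentially all the content is in Hoffman--Wielandt, and the remaining steps are bookkeeping. The two points that repay a moment's care are (a) having the normal-matrix form of Hoffman--Wielandt on hand, since the group examples of Section \ref{S:lie-groups} produce unitary (not Hermitian) matrices, and (b) checking that the matching $\sigma$ really does yield a coupling with the right marginals, which holds because eigenvalues are counted with multiplicity and $\sigma$ is a bijection. Hoffman--Wielandt is the natural tool precisely because it controls the spectra of $A$ and $B$ simultaneously through a single matching in a mean-square sense, which is exactly the structure of a transport plan and lets one pass directly from $\norm{A - B}_{HS}$ to a bound on $d_2$, and hence on $d_1$.
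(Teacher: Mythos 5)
Your proof is correct and follows essentially the same route as the paper: Hoffman--Wielandt for normal matrices, the permutation coupling to bound $d_2$ (hence $d_1$) by $n^{-1/2}\norm{A-B}_{HS}$, and then duality with Lipschitz test functions and the triangle inequality for parts \eqref{X_f-lipschitz} and \eqref{d_1-lipschitz}. You also correctly obtain the constant $n^{-1/2}$ in all three parts, which is what the paper's own proof establishes (and what the later concentration arguments actually use), the $n^{1/2}$ in the statement being a typo.
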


\begin{proof}
  If $A$ and $B$ are $n \times n$ normal matrices, then the
  Hoffman--Wielandt inequality \cite[Theorem VI.4.1]{Bhatia} implies
  that
  \begin{equation}\label{E:permutations_HS_bound}
  \min_{\sigma \in S_n} \sum_{j=1}^n \abs{\lambda_j(A) -
    \lambda_{\sigma(j)}(B)}^2 \le \norm{A - B}_{HS}^2, 
  \end{equation}
  where $\lambda_1(A), \dotsc, \lambda_n(A)$ and $\lambda_1(B),
  \dotsc, \lambda_n(B)$ are the eigenvalues (with multiplicity, in any
  order) of $A$ and $B$ respectively. Defining couplings of $\mu_A$
  and $\mu_B$ given by
  \[
  \pi_\sigma = \frac{1}{n} \sum_{j=1}^n \delta_{(\lambda_j(A),
    \lambda_{\sigma(j)}(B))}
  \]
  for $\sigma \in S_n$, it follows from \eqref{D:distance_coupling}
  and \eqref{E:permutations_HS_bound} that
  \begin{align*}
    d_1(\mu_A, \mu_B)^2 & \le d_2(\mu_A, \mu_B)^2 
    \le \min_{\sigma \in S_n} \int \abs{w - z}^2 \ d\pi_\sigma(w,z) \\
    & = \min_{\sigma \in S_n} \frac{1}{n}
    \sum_{j=1}^n \abs{\lambda_j(A) - \lambda_{\sigma(j)}(B)}^2 
     \le \frac{1}{n} \norm{A - B}_{HS}^2,
  \end{align*}
  proving the first statement of the lemma.  The final claim that $A
  \mapsto d_1(\mu_A, \rho)$ is $n^{-1/2}$-Lipschitz is now immediate.

  By the definition in \eqref{D:distance_testfcns} of $d_1$, given a
  $1$-Lipschitz $f:\C \to \R$, the mapping $\mathcal{P}_1(\C) \to \R$,
  $\mu \mapsto \int f \ d\mu$ is $1$-Lipschitz.  Combined with the
  above argument, this implies that the function
  \[
  A \mapsto \int f \ d\mu_A - \int f \ d\rho
  \]
  is $n^{-1/2}$-Lipschitz on $\mathcal{N}_n$.
\end{proof}

\begin{cor}\label{T:group-concentration}
  Let $G$ be one of $\SOrthogonal{n}$, $\SOneg{n}$, $\SUnitary{n}$, or
  $\Symplectic{n}$, and let $U \in G$.
  \begin{enumerate}
  \item For any fixed probability measure $\rho \in
    \mathcal{P}(\Circle)$ and $1$-Lipschitz $f:\Circle \to \R$, define
    the random variable
    \[
    X_f = \int f \ d\mu_U - \int f \ d\rho.
    \]
    Then
    \[
    \Prob \bigl[\abs{X_f - \E X_f} \ge t \bigr]
    \le 2 e^{-c n^2 t^2}
    \]
    for every $t > 0$.    
  \item For any fixed probability measure $\rho \in
    \mathcal{P}(\Circle)$,
    \[
    \Prob \bigl[d_1(\mu_U,\rho) - \E d_1(\mu_U,\rho) \ge t \bigr]
    \le e^{-c n^2 t^2}.
    \]
    for every $t > 0$.
  \end{enumerate}
\end{cor}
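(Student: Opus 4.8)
The plan is to combine Lemma \ref{T:Lipschitz-estimates} with the concentration inequality of Proposition \ref{T:GM}, which only needs to be applied to suitable one-sided deviations and then doubled where a two-sided statement is required. For part (1), I would first observe that by Lemma \ref{T:Lipschitz-estimates}\eqref{X_f-lipschitz}, the map $A \mapsto \int f \, d\mu_A - \int f \, d\rho$ is $n^{-1/2}$-Lipschitz on $\mathcal{N}_n$ with respect to the Hilbert--Schmidt metric; restricting to $G$ (whose elements are normal) we get that $U \mapsto X_f$ is $n^{-1/2}$-Lipschitz on $G$ in the Hilbert--Schmidt metric, hence also with respect to the geodesic metric, which dominates it. Equivalently, $n^{1/2} X_f$ is a $1$-Lipschitz function on $G$ in the geodesic metric, so Proposition \ref{T:GM} applied to $F = n^{1/2} X_f$ gives $\Prob[n^{1/2}(X_f - \E X_f) \ge s] \le e^{-cns^2}$ for every $s > 0$. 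Setting $s = n^{1/2} t$ yields $\Prob[X_f - \E X_f \ge t] \le e^{-cn^2 t^2}$. Applying the same argument to $-X_f$ (which is equally $n^{-1/2}$-Lipschitz) and adding the two bounds gives $\Prob[\abs{X_f - \E X_f} \ge t] \le 2 e^{-cn^2 t^2}$, as claimed.

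For part (2), the argument is the same but one-sided and hence even simpler. By Lemma \ref{T:Lipschitz-estimates}\eqref{d_1-lipschitz}, the map $A \mapsto d_1(\mu_A, \rho)$ is $n^{-1/2}$-Lipschitz on $\mathcal{N}_n$ in the Hilbert--Schmidt metric, so its restriction to $G$ is $n^{-1/2}$-Lipschitz in the geodesic metric as well. Applying Proposition \ref{T:GM} to $F = n^{1/2} d_1(\mu_{\cdot}, \rho)$ and substituting $t \mapsto n^{1/2} t$ exactly as above gives $\Prob[d_1(\mu_U, \rho) - \E d_1(\mu_U, \rho) \ge t] \le e^{-cn^2 t^2}$.

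There is essentially no hard step here: the corollary is a direct transfer of the abstract concentration inequality through the Lipschitz estimates already established, with the factor $n^{1/2}$ in the Lipschitz constant producing the improved $n^2$ (rather than $n$) in the exponent. The only points requiring a word of care are (a) noting that the functions in question, a priori only defined on all normal matrices, are being composed with the inclusion $G \hookrightarrow \mathcal{N}_n$, so that the Hilbert--Schmidt Lipschitz bound on $\mathcal{N}_n$ descends to $G$; and (b) recalling, as the remark following Proposition \ref{T:GM} already notes, that the geodesic metric on $G$ dominates the Hilbert--Schmidt metric, so a Hilbert--Schmidt--Lipschitz function is a fortiori geodesic-Lipschitz and Proposition \ref{T:GM} applies. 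Both are routine, so the proof is short.
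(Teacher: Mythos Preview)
Your proof is correct and follows essentially the same approach as the paper's: combine the $n^{-1/2}$-Lipschitz estimates of Lemma \ref{T:Lipschitz-estimates} with Proposition \ref{T:GM}, applying the latter to both $X_f$ and $-X_f = X_{-f}$ to obtain the two-sided bound in part (1). Your write-up simply makes explicit the rescaling that turns the $n$ in the exponent into $n^2$ and the passage from the Hilbert--Schmidt to the geodesic metric, both of which the paper leaves implicit.
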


\begin{proof}
  The first part of the corollary follows from Proposition \ref{T:GM}
  and part \eqref{X_f-lipschitz} of Lemma \ref{T:Lipschitz-estimates}
  (applied to both $X_f$ and $-X_f=X_{-f}$).

  The second part of the corollary follows from Proposition \ref{T:GM}
  and part \eqref{d_1-lipschitz} of Lemma \ref{T:Lipschitz-estimates}.
\end{proof}

As noted earlier, the strategy outlined above does not apply directly
to the full unitary and orthogonal groups, due to the lack of the
concentration property of Proposition \ref{T:GM}.  The results of
Gromov--Milman and Bakry--\'Emery fail to apply to $\Orthogonal{n}$
because it is not connected, and to $\Unitary{n}$ because its Ricci
tensor is degenerate.  Nevertheless, the main results of this section
can be extended to $\Unitary{n}$ and $\Orthogonal{n}$. In the
orthogonal case this will be done by conditioning on $\det U$, which
is why it is convenient to consider also the case of random matrices
in $\SOneg{n}$. The unitary case could be handled in a similar way,
but can also be deduced immediately from the special unitary case via
the following lemma.

\begin{lemma}\label{T:ident-distances}
  If $U \in \Unitary{n}$ and $V \in \SUnitary{n}$, then $d_1(\mu_U,
  \nu)$ and $d_1(\mu_V, \nu)$ are identically distributed.
\end{lemma}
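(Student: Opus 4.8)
The plan is to realize a Haar-distributed $U\in\Unitary{n}$ as a product $e^{i\theta}V$, where $V\in\SUnitary{n}$ is Haar-distributed, $\theta$ is uniformly distributed on $[0,2\pi)$, and $\theta$ and $V$ are independent. Granting this representation, the lemma follows quickly. The eigenvalues of $e^{i\theta}V$ are $e^{i\theta}\lambda_1(V),\dots,e^{i\theta}\lambda_n(V)$, so $\mu_{e^{i\theta}V}$ is the pushforward of $\mu_V$ under the rotation $R_\theta\colon z\mapsto e^{i\theta}z$ of $\C$. Since $R_\theta$ is an isometry of $\C$ and $\nu$ is rotation-invariant, applying $R_\theta$ to both arguments leaves the Wasserstein distance unchanged (this is immediate from either \eqref{D:distance_coupling} or \eqref{D:distance_testfcns}); thus $d_1(\mu_{e^{i\theta}V},\nu)=d_1(\mu_V,\nu)$ for every realization of $(\theta,V)$. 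Combined with $U\stackrel{d}{=}e^{i\theta}V$, this yields $d_1(\mu_U,\nu)\stackrel{d}{=}d_1(\mu_V,\nu)$, which is the claim.

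It remains to check that $e^{i\theta}V$ is Haar-distributed on $\Unitary{n}$. Since Haar measure is the unique left-invariant probability measure on $\Unitary{n}$, it suffices to show that the law of $e^{i\theta}V$ is invariant under left multiplication by an arbitrary fixed $W\in\Unitary{n}$. Choose $\psi\in\R$ with $e^{in\psi}=\det W$ and set $W_0:=e^{-i\psi}W$, so that $W_0\in\SUnitary{n}$. Then
\[
W\bigl(e^{i\theta}V\bigr)=e^{i(\theta+\psi)}(W_0V).
\]
Here $W_0V$ is Haar-distributed on $\SUnitary{n}$ by left-invariance of its Haar measure, $e^{i(\theta+\psi)}$ is uniform on $\Circle$ by translation-invariance of the uniform law on $[0,2\pi)$, and these two factors are independent because $\theta$ and $V$ are independent while $\psi$ and $W_0$ are deterministic. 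Hence $W(e^{i\theta}V)$ has the same distribution as $e^{i\theta}V$, as needed.

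I do not expect a serious obstacle; the argument is essentially bookkeeping. The one point to be careful about is maintaining independence in the left-invariance computation above, and it is worth emphasizing that no measurable selection of an $n$th root of the determinant is needed here, since the invariance check fixes only a single root $e^{i\psi}$ of $\det W$ for each fixed $W$, rather than a root-valued function of $W$.
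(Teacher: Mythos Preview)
Your proof is correct and follows essentially the same approach as the paper: couple $U$ and $V$ via $U=\omega V$ with $\omega$ uniform on $\Circle$ independent of $V$, verify that this makes $U$ Haar-distributed on $\Unitary{n}$ by checking translation invariance, and conclude by rotation invariance of $\nu$. The only cosmetic difference is that the paper checks both left and right invariance, whereas you (correctly) observe that left-invariance alone suffices to characterize Haar measure.
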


\begin{proof}
  Define a coupling of $U$ and $V$ as follows.  Let $V \in
  \SUnitary{n}$ be Haar-distributed, and let $\omega \in \Circle$ be
  uniformly distributed independently of $V$. Define $U = \omega V$.
    
  Now given any fixed $W \in \Unitary{n}$, $W = \xi Y$ for some $\xi
  \in \Circle$ and $Y \in \SUnitary{n}$, and thus
  \[
  UW = (\omega \xi) (VY)
  \]
  and
  \[
  WU = (\xi \omega) (YV)
  \]
  both have the same distribution as $\omega V = U$. Therefore $U \in
  \Unitary{n}$ is Haar-distributed.
  
  It follows that
  \[
  d_1(\mu_U, \nu) = d_1(\mu_{\omega V}, \nu) = d_1(\mu_V, \nu)
  \]
  since $\mu_{\omega V}$ is a translation (in $\Circle$) of $\mu_V$
  and $\nu$ is translation-invariant.
\end{proof}

An analogous statement to Lemma \ref{T:ident-distances} holds for
$\Orthogonal{n}$ and $\SOrthogonal{n}$ when $n$ is odd; in that case
$\omega, \xi \in \{-1, 1\}$ in the proof above.  When $n$ is even,
$-I_n \in \SOrthogonal{n}$ and so the argument breaks down, requiring
a different approach to deducing the main results for
$\Orthogonal{n}$.

\medskip

The next result carries out the second step in the plan of this
section.

\begin{thm}\label{T:group-entropy}
  Let $G$ be one of $\Orthogonal{n}$, $\SOrthogonal{n}$, $\SOneg{n}$,
  $\Unitary{n}$, $\SUnitary{n}$, or $\Symplectic{n}$, and let $U \in
  G$. Then
  \begin{equation}\label{E:group-entropy}
  \E d_1(\mu_U, \nu) \le C n^{-2/3}.
  \end{equation}
\end{thm}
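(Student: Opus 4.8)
The plan is to reduce to the four groups and cosets to which the concentration estimate of Proposition~\ref{T:GM} (hence Corollary~\ref{T:group-concentration}) applies, to realize $d_1(\mu_U,\nu)$ as the supremum of the stochastic process $(X_f)$, and to bound that supremum by combining polynomial approximation of Lipschitz functions with Dudley's entropy bound. For the reduction: if $U\in\Unitary{n}$, then by Lemma~\ref{T:ident-distances} $d_1(\mu_U,\nu)$ has the same distribution as $d_1(\mu_V,\nu)$ with $V\in\SUnitary{n}$, so the unitary case follows from the special unitary one; and if $U\in\Orthogonal{n}$, then conditionally on $\det U=1$ (respectively $\det U=-1$), events of probability $\tfrac{1}{2}$ each, the matrix $U$ is Haar-distributed on $\SOrthogonal{n}$ (respectively $\SOneg{n}$), so it suffices to prove \eqref{E:group-entropy} for $\SOrthogonal{n}$ and $\SOneg{n}$. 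Thus I may take $G\in\{\SOrthogonal{n},\SOneg{n},\SUnitary{n},\Symplectic{n}\}$, and, writing $\mu=\E\mu_U$, the triangle inequality together with Theorem~\ref{T:group-avg-distances} (which gives $d_1(\mu,\nu)\le C(\log n)/n\le Cn^{-2/3}$ for all such $G$) reduces everything to the bound $\E d_1(\mu_U,\mu)\le Cn^{-2/3}$.

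By \eqref{D:distance_testfcns}, $d_1(\mu_U,\mu)=\sup_f X_f$, where $X_f=\int f\,d\mu_U-\int f\,d\mu$ and the supremum is over $1$-Lipschitz $f\colon\Circle\to\R$; note $\E X_f=0$. The difficulty is that the index set here is the unit ball of the infinite-dimensional space $\Lip(\Circle)$, whose covering numbers in $\abs{\cdot}_{\Lip}$ are infinite, so Dudley's bound cannot be applied to it directly. To get around this, fix an integer $k\ge 1$ and invoke Jackson's theorem --- in the form obtained by convolving $f$, viewed as a $2\pi$-periodic $1$-Lipschitz function of the angle, with a nonnegative Jackson kernel, so that the approximant's Lipschitz constant stays bounded --- to obtain a real trigonometric polynomial $f_k$ of degree at most $k$ with $\abs{f_k}_{\Lip}\le C$ and $\norm{f-f_k}_\infty\le C/k$. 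Since $\mu_U$ and $\mu$ are probability measures, $\abs{X_f-X_{f_k}}\le 2\norm{f-f_k}_\infty\le C/k$, and therefore
\[
\E d_1(\mu_U,\mu)\le \E\sup_{g\in\mathcal{F}_k}X_g+\frac{C}{k},
\]
where $\mathcal{F}_k$ is the set of real trigonometric polynomials of degree at most $k$ with $\abs{g}_{\Lip}\le C$. Since $X_g$ and $\abs{g-g'}_{\Lip}$ depend on $g,g'$ only modulo additive constants, $\mathcal{F}_k$ may be regarded as a subset of $\abs{\cdot}_{\Lip}$-diameter at most $2C$ of a normed space of dimension $2k$.

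Finally, part~(1) of Corollary~\ref{T:group-concentration}, applied to $(g-g')/\abs{g-g'}_{\Lip}$, shows that $(X_g)_{g\in\mathcal{F}_k}$ has subgaussian increments: $\Prob\bigl[\,\abs{X_g-X_{g'}}\ge t\,\bigr]\le 2e^{-cn^2t^2/\abs{g-g'}_{\Lip}^2}$ for all $t>0$. A standard volumetric estimate gives $\log N(\mathcal{F}_k,\abs{\cdot}_{\Lip},\eps)\le Ck\log(C/\eps)$ for $0<\eps\le 2C$, so Dudley's entropy bound yields
\[
\E\sup_{g\in\mathcal{F}_k}X_g\le \frac{C}{n}\int_0^{2C}\sqrt{k\log(C/\eps)}\,d\eps\le \frac{C\sqrt{k}}{n}.
\]
Combining with the previous display, $\E d_1(\mu_U,\mu)\le C\bigl(\sqrt{k}/n+1/k\bigr)$; choosing $k=\lceil n^{2/3}\rceil$ gives $\E d_1(\mu_U,\mu)\le Cn^{-2/3}$, and adding back $d_1(\mu,\nu)$ completes the proof. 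The main obstacle is precisely this passage from the infinite-dimensional ball $B(\Lip(\Circle))$ to the finite-dimensional family $\mathcal{F}_k$: once Corollary~\ref{T:group-concentration} supplies the subgaussian increments, the rest is bookkeeping, and the balance between the approximation error $1/k$ and the entropy term $\sqrt{k}/n$ is exactly what forces the exponent $2/3$ --- presumably the point at which the bound stops being sharp.
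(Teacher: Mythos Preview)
Your proof is correct and follows essentially the same approach as the paper: reduce $\Unitary{n}$ and $\Orthogonal{n}$ to the special groups, apply the triangle inequality with Theorem~\ref{T:group-avg-distances}, pass to a finite-dimensional approximating family with $O(1/k)$ uniform error, and combine Corollary~\ref{T:group-concentration} with Dudley's entropy bound and a volumetric covering estimate to obtain $O(\sqrt{k}/n)$, then optimize $k$. The only difference is the choice of finite-dimensional family --- you use degree-$k$ trigonometric polynomials obtained from the Jackson kernel (which indeed preserves the Lipschitz constant), whereas the paper uses the $(m-1)$-dimensional space of piecewise-affine functions on an $m$-point grid in $\Circle$, with approximation by interpolation at the grid points; both families have dimension of the same order and the same approximation rate, so the final bound is identical.
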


\begin{proof}
  Assume for now that $G$ is one of $\SOrthogonal{n}$, $\SOneg{n}$,
  $\SUnitary{n}$, or $\Symplectic{n}$.

  Let $\Lip_0(\Circle) = \{f \in \Lip(\Circle) : f(1) = 0\}$, and
  observe that the Lipschitz seminorm $\abs{\cdot}_{\Lip}$ is a norm
  on this space; denote by $B(\Lip_0(\Circle))$ its unit ball. For
  $f:\Circle \to \R$, define the random variable
  \[
  X_f = \int f \ d\mu_U - \int f \ d\mu.
  \]
  Note that $\E X_f = 0$ for every $f$.  Since the value of $X_f$ is
  unchanged by adding a constant to $f$, by
  \eqref{D:distance_testfcns},
  \[
  d_1(\mu_U, \mu) = \sup \bigl\{ X_f \mid f \in B(\Lip_0(\Circle))
  \bigr\}.
  \]

  Fix $m \in \N$, to be determined later, and let $\Lip_0^m(\Circle)$
  be the $(m-1)$-dimensional subspace of $\Lip_0(\Circle)$ consisting
  of functions which, when interpreted instead as $2\pi$-periodic
  functions on $\R$, are affine on each subinterval
  $\bigl[\frac{2(k-1)\pi}{m}, \frac{2 k \pi}{m} \bigr]$ for $k \in
  \Z$.  Given $f \in B(\Lip_0(\Circle))$, there is a unique $g \in
  B(\Lip_0^m(\Circle))$ such that $g\bigl(\exp \bigl(i \frac{2 k
    \pi}{m} \bigr)\bigr) = f\bigl(\exp \bigl(i \frac{2 k \pi}{m}
  \bigr)\bigr)$ for every $k$.  Then $\norm{f - g}_\infty \le
  \frac{\pi}{m}$, so that
  \[
  \abs{X_f - X_g} \le \frac{2 \pi}{m}
  \]
  almost surely.  It follows that
  \begin{equation}\label{E:group-pre-Dudley}
  d_1(\mu_U, \mu) \le \sup \bigl\{ X_g \mid g \in B(\Lip_0^m(\Circle))
  \bigr\} + \frac{2 \pi}{m}.
  \end{equation}

  By Corollary \ref{T:group-concentration}, for $g, h \in
  B(\Lip_0^m(\Circle))$,
  \[
  \Prob \bigl[ \abs{X_g - X_h} \ge t \bigr] = \Prob \bigl[
  \abs{X_{g-h}} \ge t \bigr] \le 2 e^{-c n^2 t^2 / \abs{g-h}_{\Lip}^2}
  \]
  for every $t > 0$.  Thus by Dudley's entropy bound \cite{Du2},

  \begin{equation}\label{E:group-Dudley}
  \E \sup \bigl\{ X_g \mid g \in B(\Lip_0^m(\Circle)) \bigr\}
  \le \frac{C}{n} \int_0^\infty \sqrt{\log N(B(\Lip_0^m(\Circle)), 
    \abs{\cdot}_{\Lip}, \eps)} \ d\eps,
  \end{equation}
  where $N(B(\Lip_0^m(\Circle), \abs{\cdot}_{\Lip}, \eps)$ denotes the
  minimum number of $\eps$-balls with respect to $\abs{\cdot}_{\Lip}$
  needed to cover $B(\Lip_0^m(\Circle))$. (For a very neat exposition
  of Dudley's bound, see Section 1.2 of \cite{Ta}.) Since
  $B(\Lip_0^m(\Circle))$ is itself a ball with respect to the norm
  $\abs{\cdot}_{\Lip}$, there is the standard volumetric estimate
  \cite[Lemma 2.6]{MiSc}
  \[
  N(B(\Lip_0^m(\Circle)), \abs{\cdot}_{\Lip}, \eps)
  \le \left(\frac{3}{\eps}\right)^{m-1}.
  \]
  Inserting this into \eqref{E:group-Dudley} and then inserting the
  resulting estimate into \eqref{E:group-pre-Dudley} yields
  \[
  \E d_1(\mu_U, \mu) \le C \frac{\sqrt{m}}{n} + \frac{2 \pi}{m}.
  \]
  Picking $m$ of the order $n^{2/3}$ yields that
  \[
  \E d_1(\mu_U, \mu) \le \frac{C}{n^{2/3}},
  \]
  and so the theorem (except for the cases of $\Orthogonal{n}$ and
  $\Unitary{n}$) follows by Theorem \ref{T:group-avg-distances} and
  the triangle inequality for $d_1$.

  If $G = \Unitary{n}$, then the theorem follows from Lemma
  \ref{T:ident-distances} and the case of $\SUnitary{n}$.

  If $G = \Orthogonal{n}$, then conditionally on $\det U$, $U$ is
  Haar-distributed in either $\SOrthogonal{n}$ or $\SOneg{n}$.  Since
  \[
  \E d_1(\mu_U, \nu) = \E \bigl(\E \bigl[ d_1(\mu_U, \nu) \mid \det U
  \bigr]\bigr),
  \]
  the theorem follows from the cases of $\SOrthogonal{n}$ and
  $\SOneg{n}$.
\end{proof}

A direct union bound argument can also be used in place of
Dudley's theorem in the proof of Theorem \ref{T:group-entropy}, but
the argument given above is considerably more elegant and concise.

\medskip

The next two results complete the plan of this section.

\begin{cor}\label{T:group-concentration-distance}
  Let $G$ be one of $\Orthogonal{n}$, $\SOrthogonal{n}$, $\SOneg{n}$,
  $\Unitary{n}$, $\SUnitary{n}$, or $\Symplectic{n}$, and let $U \in
  G$. Then
  \[
  \Prob \left[ d_1(\mu_U, \nu) \ge C n^{-2/3} + t\right]
  \le e^{-c n^2 t^2}
  \]
  for every $t > 0$.
\end{cor}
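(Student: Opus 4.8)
The plan is to combine the mean bound of Theorem \ref{T:group-entropy} with the concentration inequality of Corollary \ref{T:group-concentration}(2), and to handle $\Unitary{n}$ and $\Orthogonal{n}$ by the same reductions to the special groups that were used in the proof of Theorem \ref{T:group-entropy}.

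First suppose $G$ is one of $\SOrthogonal{n}$, $\SOneg{n}$, $\SUnitary{n}$, or $\Symplectic{n}$. Since $\Circle$ is compact, $\nu \in \mathcal{P}(\Circle)$, so Corollary \ref{T:group-concentration}(2) applied with $\rho = \nu$ gives
\[
\Prob\bigl[ d_1(\mu_U,\nu) - \E d_1(\mu_U,\nu) \ge t \bigr] \le e^{-c n^2 t^2}
\]
for every $t > 0$. By Theorem \ref{T:group-entropy} we have $\E d_1(\mu_U,\nu) \le C_0 n^{-2/3}$ for an absolute constant $C_0$; choosing $C \ge C_0$ in the statement, the inclusion $\{ d_1(\mu_U,\nu) \ge C n^{-2/3} + t \} \subseteq \{ d_1(\mu_U,\nu) - \E d_1(\mu_U,\nu) \ge t \}$ yields the claimed tail bound for these four groups.

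For $G = \Unitary{n}$, Lemma \ref{T:ident-distances} shows that $d_1(\mu_U,\nu)$ has the same distribution as $d_1(\mu_V,\nu)$ for $V \in \SUnitary{n}$, so the bound just established for $\SUnitary{n}$ transfers verbatim. For $G = \Orthogonal{n}$, condition on $\det U$: conditionally on $\det U = 1$ (resp.\ $\det U = -1$), the matrix $U$ is Haar-distributed in $\SOrthogonal{n}$ (resp.\ $\SOneg{n}$), so in either case $\Prob\bigl[ d_1(\mu_U,\nu) \ge C n^{-2/3} + t \mid \det U \bigr] \le e^{-c n^2 t^2}$; taking the expectation over $\det U$ finishes the proof.

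There is no genuinely hard step here; the only points needing attention are the bookkeeping of absolute constants so that a single $C$ serves both Theorem \ref{T:group-entropy} and the present statement, and the observation that the conditioning/identical-distribution reductions used for the mean bound respect the \emph{one-sided} deviation inequality and hence apply unchanged to the tail.
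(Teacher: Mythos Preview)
Your proof is correct and follows essentially the same approach as the paper: the paper cites Proposition~\ref{T:GM} and Theorem~\ref{T:group-entropy} directly for the four ``special'' groups, whereas you invoke Corollary~\ref{T:group-concentration}(2), which is precisely the packaged form of Proposition~\ref{T:GM} applied to $U\mapsto d_1(\mu_U,\nu)$. The reductions for $\Unitary{n}$ via Lemma~\ref{T:ident-distances} and for $\Orthogonal{n}$ via conditioning on $\det U$ are identical to the paper's.
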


\begin{proof}
  This follows immediately from Proposition \ref{T:GM} and Theorem
  \ref{T:group-entropy}, except in the cases of $\Orthogonal{n}$ and
  $\Unitary{n}$.  If $G = \Unitary{n}$, the corollary follows from
  Lemma \ref{T:ident-distances} and the case of $\SUnitary{n}$.  If
  $G = \Orthogonal{n}$, then 
  \[
  \Prob \left[ d_1(\mu_U, \nu) \ge C n^{-2/3} + t\right] = \E \left(
    \Prob \left[ d_1(\mu_U, \nu) \ge C n^{-2/3} + t \middle\vert \det
      U \right] \right)
  \]
  and the corollary follows from the cases of $\SOrthogonal{n}$ and
  $\SOneg{n}$.
\end{proof}

\begin{cor}\label{T:group-BC}
  For each $n$ let $G_n$ be one of $\Orthogonal{n}$,
  $\SOrthogonal{n}$, $\SOneg{n}$, $\Unitary{n}$, $\SUnitary{n}$, or
  $\Symplectic{n}$, and let $U_n \in G_n$. Then with probability $1$,
  \[
  d_1(\mu_{U_n}, \nu) \le C n^{-2/3}
  \]
  for all sufficiently large $n$.
\end{cor}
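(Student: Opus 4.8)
The plan is to combine the concentration estimate of Corollary~\ref{T:group-concentration-distance} with the Borel--Cantelli lemma. Fix $\eps > 0$ and apply Corollary~\ref{T:group-concentration-distance} with $t = t_n := \eps n^{-2/3}$ (or more simply with $t$ any fixed small constant, but the polynomial choice costs nothing and is cleaner). This gives
\[
\Prob\left[ d_1(\mu_{U_n}, \nu) \ge (C + \eps) n^{-2/3} \right] \le e^{-c n^2 t_n^2} = e^{-c \eps^2 n^{2/3}}.
\]
Since $\sum_n e^{-c \eps^2 n^{2/3}} < \infty$, the Borel--Cantelli lemma implies that with probability $1$, the event $\{ d_1(\mu_{U_n}, \nu) \ge (C+\eps) n^{-2/3} \}$ occurs for only finitely many $n$; equivalently, $d_1(\mu_{U_n}, \nu) \le (C+\eps) n^{-2/3}$ for all sufficiently large $n$, almost surely. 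Renaming the constant $C + \eps$ back to $C$ (or noting $\eps$ was arbitrary but we only need one value, e.g. $\eps = 1$) yields the statement.

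One small point to address explicitly: the constants $c, C$ in Corollary~\ref{T:group-concentration-distance} are absolute (uniform over $n$ and over the choice of group $G_n$ from the allowed list), so the summability of the tail bounds is genuinely uniform and Borel--Cantelli applies even though $G_n$ is allowed to vary with $n$. No independence between the $U_n$ is needed, since Borel--Cantelli's first half does not require it.

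The main (and only real) obstacle is essentially bookkeeping: making sure the polynomial rate $n^{-2/3}$ is preserved through the argument, i.e.\ choosing $t_n$ to decay slowly enough that $n^2 t_n^2 \to \infty$ fast enough for summability but quickly enough that $t_n = O(n^{-2/3})$. The choice $t_n = n^{-2/3}$ works since $n^2 t_n^2 = n^{2/3}$ and $\sum_n e^{-c n^{2/3}} < \infty$. There is no analytic difficulty beyond this; all the substantive work has already been done in Theorem~\ref{T:group-entropy} and Corollary~\ref{T:group-concentration-distance}.
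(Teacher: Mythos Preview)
Your proof is correct and essentially identical to the paper's own argument: the paper simply says to set $t = n^{-2/3}$ in Corollary~\ref{T:group-concentration-distance} and apply the Borel--Cantelli lemma. Your additional remarks about uniformity of the constants and the irrelevance of independence are correct elaborations, but the core argument is the same.
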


\begin{proof}
  Let $t = n^{-2/3}$ in Corollary \ref{T:group-concentration-distance}
  and apply the Borel--Cantelli lemma.
\end{proof}

\medskip

The main results of this section can all be extended to Dyson's
circular ensembles (for extensive discussion, see \cite{Meh}), by a
slight variation of the same methods.  The Circular Unitary Ensemble
CUE($n$) is the same as the Haar distribution on $\Unitary{n}$.  The
Circular Orthogonal Ensemble COE($n$) is distributed as $V^T V$, where
$V$ is Haar-distributed in $\Unitary{n}$.  The Circular Symplectic
Ensemble CSE($2n$) is distributed as $JV^TJ^TV$, where $V$ is
Haar-distributed in $\Unitary{2n}$ and
\[
J = \begin{bmatrix} 0 & -1 \\ 1 & 0 \\ & & 0 & -1 \\ & & 1 & 0 \\
  & & & & \ddots \\ & & & & & 0 & -1 \\ & & & & & 1 & 0 \end{bmatrix}.
\]

\begin{thm}\label{T:circular}
  Let $U$ be drawn from COE($n$) or CSE($2n$). Then
  \[
  \E \mu_U = \nu,
  \]
  \[
  \E d_1(\mu_U, \nu) \le \frac{C}{n^{2/3}},
  \]
  and
  \[
  \Prob \left[ d_1(\mu_U, \nu) \ge C n^{-2/3} + t\right]
  \le e^{-c n^2 t^2}
  \]
  for every $t > 0$.  
  
  If, for each $n$, $U_n$ is drawn from COE($n$) or CSE($2n$), then
  with probability $1$,
  \[
  d_1(\mu_{U_n}, \nu) \le C n^{-2/3}
  \]
  for all sufficiently large $n$.
\end{thm}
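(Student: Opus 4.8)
The plan is to follow the three-step strategy used above for the classical Lie groups, with one extra device to handle the fact that the circular ensembles are defined via Haar measure on the \emph{full} unitary group, to which Proposition \ref{T:GM} does not apply. Write $m = n$ in the COE case and $m = 2n$ in the CSE case, and realize $U = \Phi(V)$ with $V$ Haar-distributed in $\Unitary{m}$, where $\Phi(V) = V^T V$ for COE and $\Phi(V) = J V^T J^T V$ for CSE. Since $J \in \Orthogonal{2n} \subseteq \Unitary{2n}$, in both cases $\Phi(V)$ is a product of unitary matrices, hence unitary and in particular normal. To prove $\E \mu_U = \nu$: for fixed $\omega \in \Circle$, $\omega V$ is again Haar-distributed in $\Unitary{m}$, and since a scalar commutes past transposes and past $J$ one has $\Phi(\omega V) = \omega^2 \Phi(V)$; thus $\mu_U$ and its rotation by $\omega^2$ have the same law. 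Letting $\omega$ range over $\Circle$ shows the law of $\mu_U$, and hence $\E \mu_U$, is rotation-invariant, so $\E \mu_U = \nu$.

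The crucial new step is to reduce the remaining assertions from $\Unitary{m}$ to $\SUnitary{m}$. The map $(\omega, W) \mapsto \omega W$ is a surjective homomorphism from $\Circle \times \SUnitary{m}$ onto $\Unitary{m}$, so it carries the product of the uniform measure on $\Circle$ and Haar measure on $\SUnitary{m}$ to Haar measure on $\Unitary{m}$. I would therefore couple $V = \omega W$ with $\omega$ uniform on $\Circle$ and $W$ Haar-distributed in $\SUnitary{m}$, independent. Then $\Phi(V) = \omega^2 \Phi(W)$, and since $\nu$ is rotation-invariant,
\[
d_1(\mu_U, \nu) = d_1\bigl(\mu_{\Phi(W)}, \nu\bigr)
\]
for this coupling; that is, $d_1(\mu_U, \nu)$ is distributed as the \emph{deterministic} function $W \mapsto d_1(\mu_{\Phi(W)}, \nu)$ on $\SUnitary{m}$. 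This is precisely what compensates for the degeneracy of the Ricci tensor of $\Unitary{m}$: the quantities of interest are invariant under the central $\Circle$-action, so passing to $\SUnitary{m}$ loses nothing. Moreover, using unitarity of $J$, $V$, $W$ together with the triangle inequality, $W \mapsto \Phi(W)$ is $2$-Lipschitz from $\Unitary{m}$ (with the Hilbert--Schmidt metric) into $\mathcal{N}_m$; composing with Lemma \ref{T:Lipschitz-estimates} shows that for any fixed $\rho \in \mathcal{P}(\Circle)$ the maps $W \mapsto d_1(\mu_{\Phi(W)}, \rho)$ and, for $1$-Lipschitz $f$, $W \mapsto \int f \, d\mu_{\Phi(W)} - \int f \, d\rho$ are $2m^{-1/2}$-Lipschitz on $\SUnitary{m}$.

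With these ingredients the remaining estimates are obtained exactly as in Theorem \ref{T:group-entropy} and Corollary \ref{T:group-concentration-distance}, now carried out on $\SUnitary{m}$. For the concentration inequality, apply Proposition \ref{T:GM} to the $2m^{-1/2}$-Lipschitz function $W \mapsto d_1(\mu_{\Phi(W)}, \nu)$ together with the mean bound, which yields $\Prob\bigl[d_1(\mu_U, \nu) \ge C n^{-2/3} + t\bigr] \le e^{-c n^2 t^2}$. For the mean bound, put $\mu' = \E \mu_{\Phi(W)}$ and repeat the Dudley-entropy argument of Theorem \ref{T:group-entropy}: the process $f \mapsto \int f \, d\mu_{\Phi(W)} - \int f \, d\mu'$ has mean zero and, by the Lipschitz estimate and Proposition \ref{T:GM}, subgaussian increments at scale $m^{-1}$, so approximating by functions affine on $\lceil m^{2/3} \rceil$ arcs and using the volumetric covering bound gives $\E \, d_1(\mu_{\Phi(W)}, \mu') \le C m^{-2/3}$. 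Finally, the rotation $W \mapsto \zeta W$ by any $m$-th root of unity $\zeta$ preserves Haar measure on $\SUnitary{m}$ and sends $\Phi(W)$ to $\zeta^2 \Phi(W)$, so $\E \tr \Phi(W)^k = \zeta^{2k} \E \tr \Phi(W)^k$ and hence $\E \tr \Phi(W)^k = 0$ for $1 \le \abs{k} < m/2$; the Fourier-approximation argument of part (3) of Theorem \ref{T:group-avg-distances} then gives $d_1(\mu', \nu) \le C (\log m)/m$. Combining, $\E \, d_1(\mu_U, \nu) = \E \, d_1(\mu_{\Phi(W)}, \nu) \le C m^{-2/3} \le C n^{-2/3}$. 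The almost sure statement follows by taking $t = n^{-2/3}$ in the concentration inequality and applying the Borel--Cantelli lemma, as in Corollary \ref{T:group-BC}.

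I expect the main obstacle to be the reduction to $\SUnitary{m}$ in the second step --- specifically, recognizing that the $\Circle$-invariance of $d_1(\mu_U, \nu)$ (and of the relevant centered test-function process) lets one replace the badly-concentrated group $\Unitary{m}$ by $\SUnitary{m}$, after which the entropy machinery and Lipschitz bookkeeping are essentially a transcription of the arguments already given. Verifying that the auxiliary reference measure $\mu' = \E \mu_{\Phi(W)}$ lies within $O(m^{-2/3})$ of $\nu$ via the partial vanishing of its moments is a minor additional point.
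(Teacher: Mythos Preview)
Your proposal is correct and follows essentially the same route as the paper: rotation-invariance for $\E\mu_U=\nu$, the coupling $V=\omega W$ to pass from $\Unitary{m}$ to $\SUnitary{m}$, the $2$-Lipschitz estimate for $W\mapsto\Phi(W)$, and then a transcription of the Dudley-entropy and concentration arguments. The only real difference is that you explicitly bound $d_1(\mu',\nu)$ for the auxiliary reference $\mu'=\E\mu_{\Phi(W)}$ via the vanishing of low moments, whereas the paper simply writes ``the remainder of the proof is the same'' and leaves that step implicit; your treatment is in fact a bit more careful on this point (and since the moments vanish exactly for $1\le\abs{k}<m/2$, you could even invoke the Jackson argument of part (2) of Theorem~\ref{T:group-avg-distances} to get $O(1/m)$ without the logarithm).
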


\begin{proof}
  For brevity the proof is given only in the case of the COE, the
  argument for the CSE being nearly identical.

  Let $U = V^T V$, where $V \in \Unitary{n}$ is Haar-distributed, and
  fix $e^{i \theta} \in \Circle$.  Then $e^{i\theta / 2} V$ is also
  Haar-distributed in $\Unitary{n}$, so $U$ has the same distribution
  as $(e^{i\theta/2} V)^T (e^{i\theta/2} V) = e^{i\theta} U$.
  Therefore $\E \mu_U$ is a rotation-invariant probability measure on
  $\Circle$, and is hence equal to $\nu$.

  Next, arguing as in the proof of Lemma \ref{T:ident-distances}, $U$
  has the same distribution as $(\omega W^T) (\omega W) = \omega^2 W^TW$,
  where $W \in \SUnitary{n}$ is Haar-distributed and $\omega \in
  \Circle$ is uniformly distributed independently of $W$.  Since
  $\omega^2$ is distributed as $\omega$, $U$ has the same distribution
  as $\omega W^T W$.  As in the proof of Lemma
  \ref{T:ident-distances}, it follows that $d_1(\mu_U, \nu)$ has the
  same distribution as $d_1(\mu_{W^T W}, \nu)$.

  Now given $W_1, W_2 \in \SUnitary{n}$,
  \begin{align*}
    \norm{W_1^T W_1 - W_2^T W_2}_{HS} & \le \norm{W_1^T (W_1 -
      W_2)}_{HS}
    + \norm{(W_1^T - W_2^T) W_2}_{HS} \\
    & = \norm{W_1 - W_2}_{HS} + \norm{W_1^T - W_2^T}_{HS} = 2
    \norm{W_1 - W_2}_{HS}.
  \end{align*}
  Thus the map $\SUnitary{n} \to \SUnitary{n}$ given by $W \mapsto W^T
  W$ is $2$-Lipschitz, and so by Proposition \ref{T:GM},
  \[
  \Prob \bigl[ F(W^T W) - \E F(W^T W) \ge t \bigr]
  \le e^{-cnt^2}
  \]
  for every $t > 0$ and every $1$-Lipschitz function $F: \SUnitary{n}
  \to \R$.

  The remainder of the proof is the same as the proofs of Theorem
  \ref{T:group-entropy}, Corollary
  \ref{T:group-concentration-distance}, and Corollary
  \ref{T:group-BC}.
\end{proof}


\bigskip

\section{Some random Hermitian matrices}\label{S:Hermitian}

In this section, we prove results comparable to Theorem
\ref{T:group-entropy} and Corollaries
\ref{T:group-concentration-distance} and \ref{T:group-BC} for two
models of Hermitian random matrices.  An essential condition on some
of the random matrices used in the constructions below is the
following.

Let $A$ be a random $n \times n$ Hermitian matrix.  Suppose that
for some $C, c > 0$, 
\begin{equation}\label{E:Hermitian-concentration}
\Prob \bigl[ \abs{F(A) - \E F(A)} \ge t \bigr]
\le C \exp \bigl[ - c n t^2 \bigr]
\end{equation}
for every $t > 0$ and $F:\mathcal{M}_n^{sa} \to \R$ which is $1$-Lipschitz with
respect to the Hilbert--Schmidt norm.  Examples in which this condition
is satisfied include:
\begin{enumerate}
\item The diagonal and upper-diagonal entries of $M$ are independent
  and each satisfy a quadratic transportation cost inequality with
  constant $c / \sqrt{n}$.  This is slightly more general than
  assuming a log-Sobolev inequality (see \cite[Section 6.2]{Ledoux}),
  and is essentially the most general condition with independent
  entries (see \cite{Go}).  It holds, e.g., for Gaussian entries and,
more generally, for entries with densities of the form $e^{-n u_{ij}(x)}$
where $u''_{ij}(x)\ge c>0.$
\item The distribution of $M$ itself has a density proportional to $e^{- n
    \tr u(M)}$ with $u:\R \to \R$ such that $u''(x) \ge c > 0$.  This
  is a subclass of the so-called unitarily invariant ensembles,
  studied extensively in mathematical physics (see \cite{DeGi}).  The
  hypothesis on $u$, via the Bakry--\'Emery theorem, guarantees that
  $M$ satisfies a log-Sobolev inequality; cf.\ \cite[Proposition
  4.4.26]{AnGuZe}.
\end{enumerate}

\medskip

One could also consider the situation in which
\eqref{E:Hermitian-concentration} is only assumed to hold for convex
Lipschitz functions $F$.  By Talagrand's theorem (see e.g.\
\cite[Section 4.2]{Ledoux}), this is the case if the diagonal and
upper-diagonal entries of $M$ are independent and supported in sets of
diameter at most $c / \sqrt{n}$.  Under this weaker condition, the
arguments below can be applied to prove results analogous to Theorem
\ref{T:group-entropy} and Corollaries
\ref{T:group-concentration-distance} and \ref{T:group-BC}, not for
$d_1(\mu_M,\E\mu_M)$ but for a ``convex-Wasserstein distance'' defined
by
\[
d_{1,\mathrm{c}}(\mu,\nu):=\sup_{\substack{f \in
    B(\Lip(\R)),\\f\,\text{convex}}}\abs{\int fd\mu-\int fd\nu}.
\]
This distance is also a metric for weak convergence of laws (see,
e.g., the proof of \cite[Theorem 2]{MMe}).

\medskip

The first model of random Hermitian matrix considered in this section
is the following.  Let $U\in\Unitary{n}$ distributed according to Haar
measure, independent of $A$, and let $P_k$ denote the projection of
$\R^n$ onto the span of the first $k$ basis elements.  Define a random
matrix $M$ by
\begin{equation}\label{E:compression}
M:=P_k U A U^* P_k^*.
\end{equation}
Then $M$ is a compression of $A$ (as an operator on $\R^n$) to a
random $k$-dimensional subspace chosen independently of $A$.  In the
case that $\{A_n\}_{n\in\N}$ is a deterministic sequence of matrices
with a limiting spectral distribution and $\frac{k}{n}\to\alpha$, the
limiting spectral distribution of $M$ can be determined using
techniques of free probability (see \cite{Sp}); the limit is given by
a free-convolution power related to the limiting spectral distribution
of $A_n$ and the value $\alpha$.  The concentration properties of the
spectral distribution of $M$ for $A$ deterministic were treated in
\cite{MeMe}, and the results below improve on those appearing in that
paper.

In the case that $k=n$, the empirical spectral measure $\mu_M$ of $M$
is the same as $\mu_A$; in particular, if $A$ satisfies a log-Sobolev
inequality and $k=n$, then the results below on the concentration of
$\mu_M$ about its mean improve on the comparable results of Guionnet
and Zeitouni from \cite{GuZe}, both in terms of the specific bounds
and in the metric used.  (The metric used in \cite{GuZe}, although
referred to there as Wasserstein, is more commonly referred to as the
bounded-Lipschitz distance and metrizes a slightly weaker topology
than the metric used here.)  We show below that the expected Wasserstein
distance of $\mu_M$ to $\E\mu_M$ is of order $n^{-2/3}$, whereas what follows
from the results of \cite{GuZe} is that the expected bounded-Lipschitz distance
of $\mu_M$ to $\E\mu_M$ is of order $n^{-2/5}$.

In the further special case that the entries on
and above the diagonal are assumed to be independent, the results
below have been surpassed (in Kolmogorov distance) in the very recent
work of G\"otze and Tikhomirov \cite{GoTi}, who proved for such matrices
that the Kolmogorov distance between the empirical spectral distribution 
 and the semicircular law is almost surely of order 
$O(n^{-1}\log^b n)$ with some positive constant $b>0$, under mild conditions
on the distributions of the entries.

The proofs below follow the same approach as described in the final
two steps of the outline given in Section \ref{S:lie-groups}.  Namely,
measure concentration, both on $\Unitary{n}$ and from the hypothesis
of \eqref{E:Hermitian-concentration}), is used together with entropy
methods to show that $\E d_1(\mu_M,\mu)$ is small, and moreover that
$d_1(\mu_M,\mu)$ is strongly concentrated near its mean.  Here,
$\mu_M$ is again the empirical spectral measure of $M$ and $\mu=\E
\mu_M$; in this section, $\mu$ is always used as a reference measure.
An additional truncation argument will be necessary, since the support
of $\mu_M$ is not necessarily uniformly bounded in this context.

The following lemma is proved using a standard discretization argument. 

\begin{lemma}[cf.\ {\cite[Proof of Proposition 4]{MeSz}}]\label{T:op-norm-bounds}
  Suppose that $\norm{\E A}_{op} \le C'$ and $A$ satisfies
  \eqref{E:Hermitian-concentration} for every convex $1$-Lipschitz
  function $F:\mathcal{M}_n^{sa} \to \R$.  Then there is a constant
  $K$ depending only on $C,c,C'$ such that
  \[
  \E \norm{A}_{op} \le K.
  \]
\end{lemma}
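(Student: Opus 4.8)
The plan is to control $\norm{A}_{op}$ by a bounded quantity plus a fluctuation term that the concentration hypothesis handles well. Since $\norm{A}_{op} = \max\{\lambda_{\max}(A), -\lambda_{\min}(A)\}$, it suffices to bound $\E\lambda_{\max}(A)$ and $\E\lambda_{\max}(-A)$ separately; note $-A$ satisfies the same hypotheses as $A$ with $\norm{\E(-A)}_{op} = \norm{\E A}_{op} \le C'$, so by symmetry I only need to bound $\E\lambda_{\max}(A)$. The function $F(A) = \lambda_{\max}(A)$ is $1$-Lipschitz with respect to $\norm{\cdot}_{HS}$ (since $\norm{\cdot}_{op} \le \norm{\cdot}_{HS}$ and $\lambda_{\max}$ is $1$-Lipschitz in operator norm) and, crucially, it is \emph{convex} as a supremum of the linear functionals $A \mapsto \inprod{Av}{v}$ over unit vectors $v$. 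Therefore \eqref{E:Hermitian-concentration} applies to $F$, giving $\Prob[\lambda_{\max}(A) - \E\lambda_{\max}(A) \ge t] \le C\exp(-cnt^2)$ for all $t > 0$.

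Next I would reduce the problem to showing that $\lambda_{\max}(A)$ is not too large with reasonable probability, rather than in expectation: if $\Prob[\lambda_{\max}(A) \ge L] \le 1/2$ for some $L$, then by the concentration inequality above, $\E\lambda_{\max}(A) \le L + \int_0^\infty \Prob[\lambda_{\max}(A) - \E\lambda_{\max}(A) \ge s]\,ds$ cannot hold directly, so instead I argue that $\E\lambda_{\max}(A)$ and the median $m$ of $\lambda_{\max}(A)$ differ by a bounded amount: integrating the tail bound gives $|\E\lambda_{\max}(A) - \E'| \le C/\sqrt{n}$-type control once the mean is replaced by any fixed centering, and comparing with the median shows $m \le \E\lambda_{\max}(A) + C/\sqrt{n}$ and $\E\lambda_{\max}(A) \le m + C/\sqrt{n}$. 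So it is enough to bound the median $m$, equivalently to find $L$ with $\Prob[\lambda_{\max}(A) \ge L] < 1/2$.

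To bound the median, I use the standard discretization (net) argument referenced in the statement (cf.\ \cite[Proof of Proposition 4]{MeSz}). Fix a $1/2$-net $\net$ of the unit sphere $\sph^{n-1}$ with $|\net| \le 5^n$; then $\lambda_{\max}(A) \le 2\max_{v \in \net}\inprod{Av}{v}$ by a routine successive-approximation estimate. For each fixed unit vector $v$, the function $A \mapsto \inprod{Av}{v}$ is linear, hence convex, and $1$-Lipschitz with respect to $\norm{\cdot}_{HS}$; moreover $\E\inprod{Av}{v} = \inprod{(\E A)v}{v} \le \norm{\E A}_{op} \le C'$. Thus \eqref{E:Hermitian-concentration} gives $\Prob[\inprod{Av}{v} \ge C' + t] \le C\exp(-cnt^2)$. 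A union bound over $\net$ yields $\Prob[\max_{v\in\net}\inprod{Av}{v} \ge C' + t] \le C\,5^n\exp(-cnt^2)$, which is less than $1/2$ once $t$ is a sufficiently large absolute constant (times a constant depending on $C, c$). Hence $\Prob[\lambda_{\max}(A) \ge 2(C' + t_0)] < 1/2$ for an appropriate $t_0$ depending only on $C, c$, giving $m \le 2(C' + t_0) =: L$ and therefore $\E\lambda_{\max}(A) \le L + C/\sqrt{n} \le K$; combining with the bound for $-A$ finishes the proof.

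The main obstacle is bookkeeping rather than conceptual: one must be careful that the constant $t_0$ from the union bound genuinely depends only on $C$ and $c$ (the exponent $cn$ must beat the net cardinality $5^n$, which works because $t_0$ can be taken large without reference to $n$), and that the passage between mean and median does not smuggle in $n$-dependence — it does not, since the tail is Gaussian with variance proxy $O(1/n) \le O(1)$. No genuinely hard analysis is required; the only subtlety is confirming that $\lambda_{\max}$ and the linear functionals $\inprod{A\cdot}{\cdot}$ are convex, so that the hypothesis \eqref{E:Hermitian-concentration} — assumed here only for convex $F$ — actually applies.
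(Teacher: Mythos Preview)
Your argument is the standard discretization (net) argument the paper is referring to, and the overall strategy is correct.  Two small technical fixes: first, the bound $\lambda_{\max}(A)\le 2\max_{v\in\net}\inprod{Av}{v}$ does not follow from a net argument for $\lambda_{\max}$ alone, since the approximation error $\abs{\inprod{Av}{v}-\inprod{Aw}{w}}\le 2\delta\norm{A}_{op}$ involves the full operator norm; it is cleaner to work directly with $\norm{A}_{op}$ (which is also convex and $1$-Lipschitz with respect to $\norm{\cdot}_{HS}$) and apply the concentration hypothesis to the convex $1$-Lipschitz functionals $A\mapsto \pm\inprod{Av}{v}$ in the union bound.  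Second, a $\tfrac12$-net is too coarse for the factor $2$ you claim---the standard estimate $\norm{A}_{op}\le (1-2\delta)^{-1}\max_{v\in\net}\abs{\inprod{Av}{v}}$ needs $\delta<\tfrac12$; take a $\tfrac14$-net (so $\abs{\net}\le 9^n$) instead.  With these adjustments the proof goes through exactly as you describe.
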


Observe that it follows from Lemma \ref{T:op-norm-bounds} that
$\E\norm{M}_{op}\le K$ for $M$ defined in \eqref{E:compression}.

The next preliminary lemma and corollary are needed to obtain
concentration properties for $M$ from those of $A$ and $U$.

\begin{lemma} \label{T:conj-lipschitz} Let $A \in \mathcal{M}_n^{sa}$
  be fixed. The map $\Unitary{n} \to \mathcal{M}_n^{sa}$, $U \mapsto U
  A U^*$ is $\delta(A)$-Lipschitz.
\end{lemma}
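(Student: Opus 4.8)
The plan is to reduce to an elementary estimate using the characterization $\delta(A) = 2\inf_{\lambda \in \R}\norm{A - \lambda I}_{op}$ recorded in the introduction. The key first observation is that for any $\lambda \in \R$ and any $U, V \in \Unitary{n}$, the difference $UAU^* - VAV^*$ is unchanged if $A$ is replaced by the shifted matrix $B := A - \lambda I$, since $U(\lambda I)U^* = \lambda I = V(\lambda I)V^*$. So it suffices to bound $\norm{UBU^* - VBV^*}_{HS}$ and then optimize over $\lambda$; this is exactly what lets us replace the crude factor $\norm{A}_{op}$ by $\delta(A)/2$.

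For the core estimate I would write
\[
UBU^* - VBV^* = UB(U^* - V^*) + (U - V)BV^*,
\]
and apply submultiplicativity of the operator norm against the Hilbert--Schmidt norm, together with the facts that $\norm{U}_{op} = \norm{V^*}_{op} = 1$ (as $U,V$ are unitary) and $\norm{U^* - V^*}_{HS} = \norm{U - V}_{HS}$. This yields
\[
\norm{UBU^* - VBV^*}_{HS} \le \norm{U}_{op}\norm{B}_{op}\norm{U^*-V^*}_{HS} + \norm{U-V}_{HS}\norm{B}_{op}\norm{V^*}_{op} = 2 \norm{B}_{op}\, \norm{U - V}_{HS}.
\]

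Taking the infimum over $\lambda \in \R$ on the right-hand side and invoking $\delta(A) = 2\inf_\lambda \norm{A - \lambda I}_{op}$ then gives $\norm{UAU^* - VAV^*}_{HS} \le \delta(A)\norm{U - V}_{HS}$, which is the assertion. I do not expect any genuine obstacle here; the only point worth flagging is that one must carry out the estimate with $A - \lambda I$ rather than $A$ itself, since working directly with $A$ would only produce the translation-non-invariant bound $2\norm{A}_{op}$, which is in general far weaker than $\delta(A)$.
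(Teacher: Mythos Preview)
Your proof is correct and is essentially identical to the paper's own argument: both shift $A$ by $\lambda I$, use the same telescoping decomposition $UBU^* - VBV^* = UB(U^*-V^*) + (U-V)BV^*$, bound via $\norm{\cdot}_{op}$--$\norm{\cdot}_{HS}$ submultiplicativity, and then optimize over $\lambda$ to convert $2\norm{A-\lambda I}_{op}$ into $\delta(A)$.
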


\begin{proof}
  For $\lambda \in \R$, let $A_\lambda = A - \lambda I$.  For any $U,
  V \in \Unitary{n}$,
  \begin{align*}
    \norm{U A U^* - V A V^*}_{HS} & =
    \norm{U A_\lambda U^* - V A_\lambda V^*}_{HS} \\
    & = \norm{ U A_\lambda (U - V)^* + (U - V) A_\lambda V^*}_{HS} \\
    & \le \norm{U A_\lambda (U - V)^*}_{HS} + \norm{(U - V) A_\lambda V^*}_{HS} \\
    & \le \norm{U A_\lambda}_{op} \norm{(U - V)^*}_{HS}
    + \norm{U - V}_{HS} \norm{A_\lambda V^*}_{op} \\
    & = 2 \norm{A_\lambda}_{op} \norm{U - V}_{HS}.
  \end{align*}
  Here we have used the facts that
  \begin{enumerate}
  \item $\norm{U}_{op} = 1$ for $U \in \Unitary{n}$,
  \item $\norm{AB}_{op} \le \norm{A}_{op} \norm{B}_{op}$ for $A, B \in
    \mathcal{M}_n$, and
  \item $\norm{AB}_{HS} \le \norm{A}_{op} \norm{B}_{HS}$ for $A, B \in
    \mathcal{M}_n$.
  \end{enumerate}
  Recalling that $\delta(A)=\inf_{\lambda}\norm{A_\lambda}_{op}$,
  optimizing over $\lambda$ proves the lemma.
\end{proof}

In \cite{MeMe} a weaker result is proved, essentially using instead of
the third fact above the weaker estimate $\norm{AB}_{HS} \le
\norm{A}_{HS} \norm{B}_{HS}$.

\begin{cor} \label{T:sub-lipschitz} 
  Let $A \in \mathcal{M}_n^{sa}$ be fixed and let $1 \le k \le n$.
  Then the map $\Unitary{n} \to \mathcal{M}_k^{sa}$ given by $U
  \mapsto P_k U A U^* P_k^*$ is $\delta(A)$-Lipschitz.
\end{cor}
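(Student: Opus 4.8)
The plan is to reduce the statement directly to Lemma~\ref{T:conj-lipschitz} by composing with a contraction. Indeed, the map $U \mapsto P_k U A U^* P_k^*$ factors as the composition of $U \mapsto UAU^*$ from $\Unitary{n}$ to $\mathcal{M}_n^{sa}$ (which is $\delta(A)$-Lipschitz by Lemma~\ref{T:conj-lipschitz}) with the compression map $B \mapsto P_k B P_k^*$ from $\mathcal{M}_n^{sa}$ to $\mathcal{M}_k^{sa}$. So it suffices to check that this latter compression map is $1$-Lipschitz with respect to the Hilbert--Schmidt norms on the two spaces of Hermitian matrices.

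The key step, then, is the observation that compression does not increase the Hilbert--Schmidt norm: for any $B \in \mathcal{M}_n^{sa}$, one has $\norm{P_k B P_k^*}_{HS} \le \norm{B}_{HS}$. This is immediate since the $(i,j)$ entries of $P_k B P_k^*$ are exactly the entries $B_{ij}$ with $1 \le i, j \le k$, so $\norm{P_k B P_k^*}_{HS}^2 = \sum_{i,j \le k} \abs{B_{ij}}^2 \le \sum_{i,j \le n} \abs{B_{ij}}^2 = \norm{B}_{HS}^2$. Applying this to $B = UAU^* - VAV^*$ and then invoking Lemma~\ref{T:conj-lipschitz} gives, for $U, V \in \Unitary{n}$,
\[
\norm{P_k U A U^* P_k^* - P_k V A V^* P_k^*}_{HS}
\le \norm{U A U^* - V A V^*}_{HS}
\le \delta(A) \norm{U - V}_{HS},
\]
which is exactly the claim.

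There is no real obstacle here; the corollary is a routine consequence of Lemma~\ref{T:conj-lipschitz} together with the trivial fact that a coordinate projection is norm-nonincreasing in Hilbert--Schmidt norm. The only point deserving a word of care is the bookkeeping with the projection $P_k$ as a map $\R^n \to \R^k$ versus an operator on $\R^n$ (i.e.\ whether one writes $P_k B P_k$ or $P_k B P_k^*$ and in which matrix space the output lives), but this is purely notational and matches the conventions already fixed in \eqref{E:compression}.
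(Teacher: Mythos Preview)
Your proof is correct and follows exactly the same approach as the paper: compose Lemma~\ref{T:conj-lipschitz} with the $1$-Lipschitz compression $B \mapsto P_k B P_k^*$, justified by the fact that $P_k B P_k^*$ is simply a submatrix of $B$. The paper's own proof is just a one-line reference to these two ingredients.
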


\begin{proof}
  Combine the Lemma \ref{T:conj-lipschitz} with the obvious fact that
  $A \mapsto P_k A P_k^*$ is $1$-Lipschitz $\mathcal{M}_n^{sa} \to
  \mathcal{M}_k^{sa}$ (since $P_k A P_k^*$ is just a submatrix of
  $A$).
\end{proof}

\begin{thm}\label{T:X-concentration}
  Suppose that $A$ satisfies \eqref{E:Hermitian-concentration} for
  every $1$-Lipschitz function $F:\mathcal{M}_n^{sa} \to \R$.
  \begin{enumerate}
  \item If $F:\mathcal{M}_n^{sa} \to \R$ is $1$-Lipschitz, then for
    $M = P_k U A U^* P_k^*$,
    \[
    \Prob \bigl[ \abs{F(M) - \E F(M)} \ge t \bigr]
    \le C \exp \bigl[ - c n t^2 \bigr]
    \]
    for every $t>0$.
  \item In particular, 
    \[\Prob \bigl[ \bigl\vert \norm{M}_{op} - \E \norm{M}_{op}\bigr\vert
    \ge t \bigr]
    \le C \exp \bigl[ - c n t^2 \bigr]\]
    for every $t>0$.
  \item For any fixed probability measure $\mu \in \mathcal{P}_2(\C)$
    and $1$-Lipschitz $f:\R \to \R$, if
    \[
    X_f = \int f \ d\mu_M - \int f \ d\mu,
    \]
    then
    \[
    \Prob \bigl[\abs{X_f - \E X_f} \ge t \bigr]
    \le C e^{- c kn t^2}
    \]
    for every $t > 0$.
  \item For any fixed probability measure $\mu \in \mathcal{P}_2(\C)$
    and $1\le p \le 2$,
    \[
    \Prob \bigl[\abs{d_p(\mu_M,\mu) - \E d_p(\mu_M,\mu)} \ge t \bigr]
    \le C e^{-c kn t^2}
    \]
    for every $t > 0$.
  \end{enumerate}
\end{thm}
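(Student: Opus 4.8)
The plan is to handle all four parts by the same device: condition on $A$, use the dilation invariance of $M$ to replace the Haar-random $U\in\Unitary{n}$ by a Haar-random element of $\SUnitary{n}$ (so that Proposition~\ref{T:GM} becomes available), and then combine the resulting conditional estimate with concentration in $A$ coming from \eqref{E:Hermitian-concentration}. Part~(2) is the special case $F=\norm{\cdot}_{op}$ of part~(1) (as $\norm{\cdot}_{op}\le\norm{\cdot}_{HS}$ makes $\norm{\cdot}_{op}$ $1$-Lipschitz for $\norm{\cdot}_{HS}$), and parts~(3) and~(4) are proved exactly like part~(1) except that the functional of $M$ involved carries an extra Lipschitz factor $k^{-1/2}$, improving the exponent from $n$ to $kn$. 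So it suffices to explain part~(1).

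The key observation is that $M=P_kUAU^*P_k^*$ is unchanged when $U$ is replaced by $\omega U$ for $\omega\in\Circle$, since $(\omega U)A(\omega U)^*=\abs{\omega}^2UAU^*=UAU^*$. Writing a Haar-random $U\in\Unitary{n}$ as $U=\omega V$ with $\omega\in\Circle$ uniform and $V\in\SUnitary{n}$ Haar-distributed and independent (as in the proof of Lemma~\ref{T:ident-distances}), we therefore have $M=P_kVAV^*P_k^*$, so $M$ is a function of $(A,V)$ alone. Fix $A$; by Corollary~\ref{T:sub-lipschitz}, restricted to $\SUnitary{n}\subseteq\Unitary{n}$, the map $V\mapsto M$ is $\delta(A)$-Lipschitz into $\mathcal{M}_k^{sa}$, so composing with the $1$-Lipschitz $F$ and applying Proposition~\ref{T:GM} on $\SUnitary{n}$ gives
\[
\Prob\bigl[\abs{F(M)-G(A)}\ge t \,\big|\, A\bigr]\le 2e^{-cnt^2/\delta(A)^2},
\qquad\text{where }G(A):=\E\bigl[F(M)\,\big|\,A\bigr].
\]
On the other hand, for fixed $V$ the map $A\mapsto M=P_kVAV^*P_k^*$ is $1$-Lipschitz for $\norm{\cdot}_{HS}$ (compression to a submatrix is $1$-Lipschitz and conjugation by a unitary is a $\norm{\cdot}_{HS}$-isometry), so $A\mapsto G(A)$ is $1$-Lipschitz and \eqref{E:Hermitian-concentration} gives $\Prob\bigl[\abs{G(A)-\E G(A)}\ge t\bigr]\le Ce^{-cnt^2}$.

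Since $\E F(M)=\E G(A)$, a union bound over $\{\abs{F(M)-G(A)}\ge t/2\}$ and $\{\abs{G(A)-\E G(A)}\ge t/2\}$ together with the conditional estimate above reduces part~(1) to bounding $\E\bigl[e^{-cnt^2/\delta(A)^2}\bigr]$. This is the one genuinely delicate point, since $\delta(A)$ is random and not a priori bounded; I expect it to be the main obstacle. It is handled by a truncation: $\delta(A)\le 2\norm{A}_{op}$, the function $\norm{\cdot}_{op}$ is $1$-Lipschitz so $\norm{A}_{op}$ concentrates about its mean at rate $e^{-cnt^2}$ by \eqref{E:Hermitian-concentration}, and $\E\norm{A}_{op}\le K$ by Lemma~\ref{T:op-norm-bounds} (applicable under the standing assumption $\norm{\E A}_{op}\le C'$); hence a constant $R$, a suitable multiple of $K$, satisfies $\Prob[\norm{A}_{op}>R]\le Ce^{-cnR^2}$. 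On $\{\norm{A}_{op}\le R\}$ one has $\delta(A)\le 2R=O(1)$, making the conditional probability at most $2e^{-c'nt^2}$; on the complement the conditional probability is at most $1$ and $\Prob[\norm{A}_{op}>R]$ is absorbed after adjusting constants. This gives~(1), and hence~(2).

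For parts~(3) and~(4) the functional of $M$ is $A'\mapsto\int f\,d\mu_{A'}$ (for~(3); the constant $\int f\,d\mu$ cancels in $X_f-\E X_f$) or $A'\mapsto d_p(\mu_{A'},\mu)$ (for~(4)). By part~\eqref{X_f-lipschitz} of Lemma~\ref{T:Lipschitz-estimates} (with $n$ replaced by $k$) the first is $k^{-1/2}$-Lipschitz $\mathcal{M}_k^{sa}\to\R$ for $\norm{\cdot}_{HS}$, and so is the second, since
\[
\abs{d_p(\mu_{A'},\mu)-d_p(\mu_{B'},\mu)}\le d_p(\mu_{A'},\mu_{B'})\le d_2(\mu_{A'},\mu_{B'})\le k^{-1/2}\norm{A'-B'}_{HS}
\]
by the Hoffman--Wielandt estimate from the proof of Lemma~\ref{T:Lipschitz-estimates} and $d_p\le d_2$ (here $\mu\in\mathcal{P}_2(\C)$ and $p\le 2$ ensure $d_p(\mu_{A'},\mu)<\infty$). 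Rerunning the argument above with this factor $k^{-1/2}$, the conditional map on $\SUnitary{n}$ is $k^{-1/2}\delta(A)$-Lipschitz, so Proposition~\ref{T:GM} gives the conditional rate $e^{-cknt^2/\delta(A)^2}$, while $G(A)$ is now $k^{-1/2}$-Lipschitz in $A$, so \eqref{E:Hermitian-concentration} gives the rate $e^{-cknt^2}$; the same truncation of $\delta(A)$ then produces the bound $Ce^{-cknt^2}$, completing~(3) and~(4).
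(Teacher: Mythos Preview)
Your overall device---split the deviation by conditioning on one of the two sources of randomness and apply concentration to each piece---is exactly the paper's, and your reduction from $\Unitary{n}$ to $\SUnitary{n}$ via $U=\omega V$ is a detail the paper leaves implicit here (it is spelled out in the parallel proof of Theorem~\ref{T:AB}). Your derivation of parts~(3) and~(4) from the $k^{-1/2}$-Lipschitz estimates of Lemma~\ref{T:Lipschitz-estimates} is also correct and matches the paper.

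The substantive difference is the \emph{order} of conditioning. You condition on $A$ first, so the concentration step in $V$ carries the \emph{random} Lipschitz constant $\delta(A)$, and you are forced into a truncation on $\norm{A}_{op}$. The paper conditions on $U$ first: conditional on $U$, the map $A\mapsto F(M)$ is $1$-Lipschitz and \eqref{E:Hermitian-concentration} applies directly; then $H(U):=\E[F(M)\mid U]$, being an average over $A$ of functions that are $\delta(A)$-Lipschitz in $U$, is $\E[\delta(A)]$-Lipschitz---a \emph{deterministic} constant, bounded via Lemma~\ref{T:op-norm-bounds}. With this order no truncation is needed at all, and Proposition~\ref{T:GM} gives the bound for every $t>0$ immediately.

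This matters because your truncation, as written, does not quite deliver the claim for all $t>0$. With $R$ a fixed constant, the contribution from $\{\norm{A}_{op}>R\}$ is of order $e^{-c'n}$, independent of $t$, and so cannot be ``absorbed after adjusting constants'' into a bound of the form $Ce^{-cnt^2}$ once $t$ exceeds a fixed threshold. (Letting $R$ grow with $t$ does not help either: on $\{\norm{A}_{op}\le R\}$ the conditional bound is $2e^{-cnt^2/(16R^2)}$, which degenerates as $R$ grows.) The gap is repairable with more work, but the paper's order of conditioning sidesteps it entirely and is the cleaner route.
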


\begin{proof}
For the first part, observe that 
\begin{equation*}\begin{split}
\Prob \bigl[ \abs{F(M) - \E F(M)} \ge t \bigr]&\le
\E\left(\Prob \left[\left. \abs{F(M) - \E \left[F(M) 
\middle\vert U\right]} 
\ge \frac{t}{2}\right| U \right]\right)
\\&\qquad\qquad
+\Prob \left[ \abs{\E \left[F(M) \middle\vert U\right]- \E F(M)} \ge 
\frac{t}{2} \right].
\end{split}\end{equation*}
Conditional on $U$, $F(M)$ is a $1$-Lipschitz function of $A$, and by
taking expectation over $U$ in Corollary \ref{T:sub-lipschitz}, it
follows that $\E\left[F(M) \middle\vert U\right]$ is an
$\E[\delta(A)]$-Lipschitz function of $U$.  The first part thus
follows from the hypothesis on $A$ and Lemma \ref{T:op-norm-bounds}.
Part (2) follows from part (1) and the fact that the operator norm is
a 1-Lipschitz function with respect to the Hilbert--Schmidt norm on
$\mathcal{M}_n^{sa}$.  The remaining parts follow from Lemma
\ref{T:Lipschitz-estimates} and part (1).
\end{proof}

To estimate $\E d_1(\mu_M, \mu)$ (where, as before, $\mu = \E \mu_M$)
the arguments in the previous section can be supplemented with a
truncation argument using the lemma above to obtain the following.

\begin{thm}\label{T:compression-entropy}
  Suppose that $A$ satisfies \eqref{E:Hermitian-concentration} for
  every $1$-Lipschitz function $F:\mathcal{M}_n^{sa} \to \R$.  Let $M
  = P_k U A U^* P_k^*$, and let $\mu_M$ denote the empirical spectral
  distribution of $M$ with $\mu=\E \mu_M$.  Then
  \[
  \E d_1(\mu_M,\mu) \le \frac{C''(\E\norm{M}_{op})^{1/3}}{(kn)^{1/3}}
  \le \frac{C'''}{(kn)^{1/3}},
  \]
  and so
  \[
  \mathbb{P}\left[d_1(\mu_M,\mu) > \frac{C'''}{(kn)^{1/3}}+
    t\right]\le Ce^{-cknt^2}
  \]
  for each $t > 0$.
\end{thm}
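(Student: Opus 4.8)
The plan is to follow the entropy-and-concentration strategy of the proof of Theorem \ref{T:group-entropy}, inserting a truncation step to account for the fact that $\mu_M$ need not be compactly supported. Two of the three assertions reduce to the first. Once $\E d_1(\mu_M,\mu)\le C'''(kn)^{-1/3}$ is known, the tail bound follows from
\[
\Prob\bigl[d_1(\mu_M,\mu)>C'''(kn)^{-1/3}+t\bigr]\le\Prob\bigl[d_1(\mu_M,\mu)-\E d_1(\mu_M,\mu)>t\bigr]\le Ce^{-cknt^2}
\]
by part (4) of Theorem \ref{T:X-concentration} with $p=1$ (note $\mu=\E\mu_M\in\mathcal{P}_2(\C)$, since $\norm{M}_{op}$ has subgaussian tails), and the second inequality in the displayed bound of the theorem is the statement that $\E\norm{M}_{op}$ is bounded by an absolute constant, which is the remark following Lemma \ref{T:op-norm-bounds}. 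So the real task is to prove $\E d_1(\mu_M,\mu)\le C''(\E\norm{M}_{op})^{1/3}(kn)^{-1/3}$.

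Since $M$ is $k\times k$ Hermitian, $\mu_M$ and $\mu$ are supported on $\R$; writing $X_f:=\int f\,d\mu_M-\int f\,d\mu$, we have $\E X_f=0$ for every $f$ (because $\mu=\E\mu_M$), and, since $X_f$ is unchanged by adding a constant to $f$, $d_1(\mu_M,\mu)=\sup\{X_f:f\in\Lip(\R),\ \abs{f}_{\Lip}\le1,\ f(0)=0\}$. Put $R:=\E\norm{M}_{op}$ and $L:=2R$. If $R$ is so small that $2R\le R^{1/3}(kn)^{-1/3}$, the crude bound $d_1(\mu_M,\mu)\le\int\abs{x}\,d\mu_M+\int\abs{x}\,d\mu\le\norm{M}_{op}+R$ already gives the claim on taking expectations, so assume otherwise. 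Given an admissible $f$, let $\tilde f$ agree with $f$ on $[-L,L]$ and be constant on each of $(-\infty,-L]$ and $[L,\infty)$; then $\tilde f$ is again admissible, $\abs{f(x)-\tilde f(x)}\le\abs{x}$ for $\abs{x}>L$ and $=0$ otherwise, so
\[
\abs{X_f-X_{\tilde f}}\le\int_{\abs{x}>L}\abs{x}\,d\mu_M(x)+\int_{\abs{x}>L}\abs{x}\,d\mu(x)\le\norm{M}_{op}\ind{\norm{M}_{op}>L}+\E\bigl[\norm{M}_{op}\ind{\norm{M}_{op}>L}\bigr].
\]
Taking expectations and using the operator-norm concentration from part (2) of Theorem \ref{T:X-concentration}, $\E\sup_f\abs{X_f-X_{\tilde f}}\le 2\E[\norm{M}_{op}\ind{\norm{M}_{op}>2R}]$ is exponentially small in $n$, hence negligible against $R^{1/3}(kn)^{-1/3}$.

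It remains to bound $\E\sup_f X_{\tilde f}$, the supremum being over admissible $f$, and here the proof of Theorem \ref{T:group-entropy} applies with $\Circle$ replaced by the interval $[-L,L]$. Fix $m\in\N$, let $V_m$ be the $m$-dimensional space of $1$-Lipschitz functions on $\R$ that vanish at $0$, are constant outside $[-L,L]$, and are affine on each $[-L+\frac{2L(j-1)}{m},-L+\frac{2Lj}{m}]$, and for an admissible $f$ pick $g\in V_m$ with $\abs{g}_{\Lip}\le1$ interpolating $\tilde f$ at the nodes; then $\norm{\tilde f-g}_\infty\le L/m$, so $\abs{X_{\tilde f}-X_g}\le 2L/m$. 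Part (3) of Theorem \ref{T:X-concentration} gives $\Prob[\abs{X_g-X_h}\ge t]\le Ce^{-cknt^2/\abs{g-h}_{\Lip}^2}$, the unit ball $B_m$ of $(V_m,\abs{\cdot}_{\Lip})$ satisfies the volumetric estimate $N(B_m,\abs{\cdot}_{\Lip},\eps)\le(3/\eps)^m$, and Dudley's entropy bound yields
\[
\E\sup\bigl\{X_g:g\in B_m\bigr\}\le\frac{C}{\sqrt{kn}}\int_0^\infty\sqrt{\log N(B_m,\abs{\cdot}_{\Lip},\eps)}\,d\eps\le\frac{C\sqrt{m}}{\sqrt{kn}}.
\]
Combining the three estimates, $\E d_1(\mu_M,\mu)\le C\sqrt{m}/\sqrt{kn}+2L/m+(\text{negligible})$; choosing $m$ of order $(L^2kn)^{1/3}$ balances the first two terms and produces $\E d_1(\mu_M,\mu)\le CL^{1/3}(kn)^{-1/3}=C''(\E\norm{M}_{op})^{1/3}(kn)^{-1/3}$.

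The only genuinely new point relative to the Lie-group argument is the truncation: the cutoff $L$ must be taken proportional to $\E\norm{M}_{op}$ so that the optimization in $m$ reproduces exactly the factor $(\E\norm{M}_{op})^{1/3}$, while the subgaussian tail of $\norm{M}_{op}$ from part (2) of Theorem \ref{T:X-concentration} is what makes the mass of $\mu_M$ and $\mu$ outside $[-L,L]$ harmless; the degenerate regime in which $\E\norm{M}_{op}\lesssim(kn)^{-1/2}$ is dispatched by the crude bound $d_1(\mu_M,\mu)\le\norm{M}_{op}+\E\norm{M}_{op}$. Everything else — the piecewise-linear approximation, the volumetric covering number, and Dudley's inequality — is carried over essentially verbatim from the proof of Theorem \ref{T:group-entropy}.
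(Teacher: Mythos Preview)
Your proof is correct and follows essentially the same approach as the paper's: a truncation step to reduce to compactly supported test functions, followed by the piecewise-linear approximation and Dudley entropy bound exactly as in Theorem~\ref{T:group-entropy}. The only cosmetic differences are that the paper truncates at $2R=\E\norm{M}_{op}+1$ (the ``$+1$'' forcing the tail to be $e^{-cn}$ uniformly, which obviates your separate treatment of the degenerate small-$R$ regime) and defines an $f_R$ that drops linearly to zero rather than an $\tilde f$ held constant outside $[-L,L]$; neither change affects the substance of the argument.
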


\begin{proof}
  Denote by $\Lip_0(\R) = \{ f \in \Lip(\R) : f(0) = 0\}$, and observe
  that
  \[
  \E d_1(\mu_M,\mu) = \E \sup\left\{ X_f : f \in B(\Lip_0(R)) \right\},
  \]
  where 
  \[
  X_f := \int f \ d\mu_M-\int f \ d\mu
  \] 
  as before.  The indexing space can be reduced to compactly supported
  functions via a truncation argument, as follows.  Fix $R>0$, and let
  \[
  f_R(x) = \begin{cases} f(x) & \text{ if } \abs{x} \le R; \\
    f(R) + \bigl[\sgn(f(R))\bigr](R-x) & \text{ if } R < x < R + \abs{f(R)})\\
    f(-R)+\bigl[\sgn(f(-R))\bigr](x-R) & \text{ if } -\abs{f(-R)} - R < x < -R;\\
    0 & \text{ if } x \le -R - \abs{f(-R)}] \text{ or } x \ge R +
    \abs{f(R)};
  \end{cases}
  \]
  that is, $f_R=f$ for $\abs{x}\le R$ and then drops off linearly to
  zero, so that $f_R$ is 1-Lipschitz, $f(x)=0$ for $\abs{x} > 2R$, and
  $\abs{f(x)-f_R(x)} \le \abs{x}$ for all $x \in \R$.  Then by
  Fubini's theorem,
  \begin{equation*}\begin{split}
      \abs{\int f \ d\mu_M-\int f_R \ d\mu_M}&
      \le \int_{\abs{x} > 2R} \abs{x} \ d\mu_M(x) \\
      & \le 2R \int_{\abs{x} > 2R} \ d\mu_M(x) 
      + \int_{2R}^\infty \mu_M((t,\infty)) \ dt
      + \int_{-\infty}^{-2R}\mu_M((-\infty,t)) \ dt.
    \end{split}\end{equation*}
  Taking the supremum over $f$ followed by expectation over $M$, and
  making use of part (2) of Theorem \ref{T:X-concentration} together
  with the trivial bound
  $\E\mu_M\bigl((-\infty,t)\cup(t,\infty)\bigr)\le n
  \P[\norm{M}_{op}\ge t]$ yields
  \[
  \E\sup\left\{\abs{\int (f-f_R) \ d\mu_M} : f \in B(\Lip_0(\R)) 
  \right\}\le
  CRn\exp\left[-cn(2R-\E\norm{M}_{op})^2 \right],
  \]
  and the same holds if $\mu_M$ is replaced by $\mu$.  Taking, for
  example, $2R=\E\norm{M}_{op}+1$ gives that
  \[
  \E\sup\left\{\abs{X_f - X_{f_R}} : f \in B(\Lip_0(\R)) \right\}\le
  Cn\bigl(\E\norm{M}_{op}\bigr)e^{-cn}.
  \] 
  Consider therefore the process $X_f$ indexed by
  $\Lip_{1,\frac{1}{2}(\E\norm{M}_{op}+1)}$ (with norm $\abs{\cdot}_{\Lip}$),
  where
  \[
  \Lip_{a,b} := \left\{f:\R\to\R : \abs{f}_{\Lip}\le a; f(x)=0 \text{ if } 
    \abs{x} > b \right\}.
  \]
  The above argument shows that
  \begin{equation}\label{trunc_error}
    \E\Bigl[d_1(\mu_M,\mu) \Bigr] \le 
    \E\Bigl[\sup\left\{X_f : f \in \Lip_{1,\frac{1}{2}
        (\E\norm{M}_{op}+1)} \right\} \Bigr]
    + C n \bigl(\E\norm{M}_{op}\bigr)e^{-cn}.
  \end{equation}
  
  Now that the indexing space of the process has been reduced to
  compactly supported functions, the proof can be completed exactly as
  in the case of Theorem \ref{T:group-entropy}; the additional error
  incurred by the truncation above is negligible compared to the
  errors produced by the earlier argument.  The factor
  $(\E\norm{M}_{op})^{1/3}$ in the final bound is due to the size of
  the truncation parameter $R$ (in the proof of Theorem
  \ref{T:group-entropy}, the corresponding quantity was simply $2\pi$
  and therefore disappeared into the constants in the statement).
\end{proof}

\begin{cor}
  For each $n$, let $A_n \in \mathcal{M}_n^{sa}$ be fixed with
  spectrum bounded independently of $n$.  Let $U_n \in \Unitary{n}$ be
  Haar-distributed and fix $k$. Let $M_n = P_k U A_n U^* P_k^*$ and
  let $\mu_n = \E \mu_{M_n}$. Then with probability $1$,
  \[
  d_1(\mu_{M_n}, \mu_n) \le C n^{-1/3},
  \]
  where $C$ depends only on $k$ and the bounds on the sizes of the
  spectra of $A_n$.
\end{cor}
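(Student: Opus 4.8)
The plan is to deduce this from Theorem \ref{T:compression-entropy} and the Borel--Cantelli lemma, exactly along the lines of the proof of Corollary \ref{T:group-BC}.

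First I would check that the hypotheses of Theorem \ref{T:compression-entropy} hold. Since each $A_n$ is deterministic, $F(A_n) - \E F(A_n) = 0$ for every $F$, so $A_n$ trivially satisfies \eqref{E:Hermitian-concentration}. Moreover, because $\norm{P_k}_{op} = 1$ and conjugation by a unitary preserves the operator norm, $\norm{M_n}_{op} \le \norm{A_n}_{op}$, which is bounded by a constant $K$ depending only on the assumed bounds on the spectra of the $A_n$; in particular $\E \norm{M_n}_{op} \le K$. Theorem \ref{T:compression-entropy} then gives
\[
\Prob \left[ d_1(\mu_{M_n}, \mu_n) > \frac{C'''}{(kn)^{1/3}} + t \right] \le C e^{-c k n t^2}
\]
for every $t > 0$, where $C'''$ depends only on $k$ and $K$.

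Now I would take $t = n^{-1/3}$. Since $k$ is fixed, $\frac{C'''}{(kn)^{1/3}} + n^{-1/3} \le C n^{-1/3}$ for a constant $C$ depending only on $k$ and $K$, and
\[
\Prob \left[ d_1(\mu_{M_n}, \mu_n) > C n^{-1/3} \right] \le C e^{-c k n^{1/3}}.
\]
The right-hand side is summable over $n$, so the Borel--Cantelli lemma gives that with probability $1$, $d_1(\mu_{M_n}, \mu_n) \le C n^{-1/3}$ for all sufficiently large $n$, which is the assertion.

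There is no serious obstacle here; the only point requiring a moment's care is that the choice $t = n^{-1/3}$ must keep the exponential tail summable, which works because the exponent $c k n t^2$ then grows like $n^{1/3}$. (The same reasoning would in fact allow $k = k_n$ to grow with $n$, giving the almost sure rate $(k_n n)^{-1/3}$, provided $k_n n^{1/3} \to \infty$.)
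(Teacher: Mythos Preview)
Your proof is correct and follows exactly the same approach as the paper: apply Theorem \ref{T:compression-entropy} with $t = n^{-1/3}$ and invoke the Borel--Cantelli lemma. You have simply filled in the details (that a deterministic $A_n$ trivially satisfies \eqref{E:Hermitian-concentration}, that $\E\norm{M_n}_{op}$ is uniformly bounded, and that the resulting tail is summable) which the paper leaves implicit in its one-line proof.
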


\begin{proof}
  This follows from Theorem \ref{T:compression-entropy}, using $t =
  n^{-1/3}$ and the Borell--Cantelli lemma.
\end{proof}

\bigskip

The second model of random matrix considered in this section is is
defined as follows.  Let $A,B\in\mathcal{M}_n^{sa}$ satisfy condition
\eqref{E:Hermitian-concentration} let $U \in \Unitary{n}$ be Haar
distributed, with $A,B,U$ independent.  Define \[M = U A U^* + B,\]
the ``randomized sum'' of $A$ and $B$.  In the case of deterministic
sequences $\{A_n\}$ and $\{B_n\}$, this model has been studied at some
length.  The limiting spectral measure was studied first by Voiculescu
\cite{Vo} and Speicher \cite{Sp2}, who showed that if $\{A_n\}$ and
$\{B_n\}$ have limiting eigenvalue distributions $\mu_A$ and $\mu_B$
respectively, and if $M_n:=UA_nU^*+B_n$, then the limiting spectral
distribution of $M_n$ is given by the free convolution
$\mu_A\boxplus\mu_B$.  More recently, Chatterjee \cite{Ch} showed
subexponential concentration (up to a logarithmic factor) of
$\mu_{M_n}$ about its mean; Kargin \cite{Kargin} improved this to
subgaussian concentration (again up to a logarithmic factor), and was
furthermore able to consider the distance to
$\mu_{A_n}\boxplus\mu_{B_n}$ itself, rather than $\E\mu_{M_n}$.
Theorem \ref{T:AB-entropy} below gives a similar level of
concentration to Kargin's result.  The main differences are that here
the reference measure is $\E\mu_{M_n}$ rather than a free convolution;
the matrices $A_n$ and $B_n$ may be random here, whereas Kargin's
result requires $A_n$ and $B_n$ to be deterministic; and Kargin's
result is in terms of Kolmogorov distance, rather than Wasserstein
distance.

The proofs below once again follow the same approach as described in
the final two steps of the outline given in Section
\ref{S:lie-groups}.

Note that by Weyl's inequalities \cite[Theorem III.2.1]{Bhatia}, the
spectrum of $M$ always lies in the interval $[\lambda_{\min}(A) +
\lambda_{\min}(B), \lambda_{\max}(A) + \lambda_{\max}(B)]$, of length
$\delta(A) + \delta(B)$, and so by Lemma \ref{T:op-norm-bounds},
$\E\norm{M}_{op}$ is bounded in terms of the constants in
\eqref{E:Hermitian-concentration} for $A$ and $B$.  We also have the
following analog of Theorem \ref{T:X-concentration}.

\begin{thm}[cf.\ {\cite[Corollary 4.4.30]{AnGuZe}}] \label{T:AB} Let
  $A,B \in \mathcal{M}_n^{sa}$ satisfying
  \eqref{E:Hermitian-concentration} and let $U \in \Unitary{n}$ be
  Haar-distributed with $A,B,U$ independent.  Define $M = U A U^*+ B$.
  \begin{enumerate}
  \item There exist $C,c$ depending only on the constants in
    \eqref{E:Hermitian-concentration} for $A$ and $B$, such that if
    $F: \mathcal{M}_k^{sa} \to \R$ is $1$-Lipschitz, then
    \[
    \Prob \bigl[\abs{F(M) - \E F(M)} \ge t \bigr]
    \le C \exp \left[- cnt^2\right]
    \]
    for every $t > 0$.
  \item In particular, 
    \[
    \Prob \bigl[\abs{\norm{M}_{op} - \E \norm{M}_{op}} \ge t \bigr]
    \le C \exp \left[- cnt^2\right]
    \]
    for every $t > 0$.
  \item For any fixed probability measure $\rho \in \mathcal{P}_2(\R)$
    and $1$-Lipschitz $f:\R \to \R$, let 
    \[
    X_f = \int f \ d\mu_M - \int f \ d\rho.
    \]
    Then
    \[
    \Prob \bigl[\abs{X_f - \E X_f} \ge t \bigr]
    \le C \exp \left[ - cn^2 t^2\right]
    \]
    for every $t > 0$.
  \item For any fixed probability measure $\rho \in \mathcal{P}_2(\R)$
    and $1\le p \le 2$,
    \[
    \Prob \bigl[\abs{d_p(\mu_M,\mu) - \E d_p(\mu_M,\mu)} \ge t \bigr]
    \le C \exp \left[ - c n^2 t^2\right]
    \]
    for every $t > 0$.
  \end{enumerate}
\end{thm}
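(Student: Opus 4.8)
The plan is to prove part (1) by a conditioning argument paralleling the proof of Theorem \ref{T:X-concentration}(1), and then to deduce parts (2)--(4) from it by the now-familiar Lipschitz bookkeeping. Fix a $1$-Lipschitz $F:\mathcal{M}_n^{sa}\to\R$ and write $M=UAU^*+B$. I would peel off the three sources of randomness in turn. Conditionally on $(U,B)$ the map $A\mapsto F(UAU^*+B)$ is $1$-Lipschitz with respect to $\norm{\cdot}_{HS}$ (since $A\mapsto UAU^*$ is an HS-isometry), so \eqref{E:Hermitian-concentration} for $A$ gives subgaussian concentration of $F(M)$ about $\E[F(M)\mid U,B]$ at rate $Ce^{-cnt^2}$. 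Conditionally on $U$ the map $B\mapsto\E_A[F(UAU^*+B)]=\E[F(M)\mid U,B]$ is $1$-Lipschitz in $B$ (an average of $1$-Lipschitz functions), so \eqref{E:Hermitian-concentration} for $B$ gives concentration of $\E[F(M)\mid U,B]$ about $\E[F(M)\mid U]$, again at rate $Ce^{-cnt^2}$. Finally, for fixed $A,B$, Lemma \ref{T:conj-lipschitz} shows $U\mapsto F(UAU^*+B)$ is $\delta(A)$-Lipschitz, so $U\mapsto\E[F(M)\mid U]=\E_{A,B}[F(UAU^*+B)]$ is $\E[\delta(A)]$-Lipschitz; since $UAU^*$ is unchanged by $U\mapsto\omega U$ for $\omega\in\Circle$, one may replace $U$ by a Haar-random element of $\SUnitary{n}$ exactly as in the proof of Lemma \ref{T:ident-distances}, and then apply Proposition \ref{T:GM} to get concentration of $\E[F(M)\mid U]$ about $\E F(M)$ at rate $e^{-cn(\E\delta(A))^{-2}t^2}$.

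Because $\E\delta(A)$ and $\E\delta(B)$ are bounded in terms of the constants in \eqref{E:Hermitian-concentration} (as recorded in the discussion preceding the theorem, via Weyl's inequalities and Lemma \ref{T:op-norm-bounds}), all three of the rates above can be written uniformly as $Ce^{-cnt^2}$. Combining the three estimates by a union bound over the events $\{\abs{F(M)-\E[F(M)\mid U,B]}\ge t/3\}$, $\{\abs{\E[F(M)\mid U,B]-\E[F(M)\mid U]}\ge t/3\}$, and $\{\abs{\E[F(M)\mid U]-\E F(M)}\ge t/3\}$ (taking the probability of the first two by first conditioning) then proves part (1).

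Part (2) is immediate from part (1) since $\norm{\cdot}_{op}$ is $1$-Lipschitz with respect to $\norm{\cdot}_{HS}$ on $\mathcal{M}_n^{sa}$. For parts (3) and (4), I would use Lemma \ref{T:Lipschitz-estimates}: for a fixed reference measure $\rho$ and a $1$-Lipschitz $f$, the map $M\mapsto\int f\,d\mu_M-\int f\,d\rho$ is $n^{-1/2}$-Lipschitz on $\mathcal{M}_n^{sa}$, and so is $M\mapsto d_p(\mu_M,\rho)$ for $1\le p\le2$ (from the proof of Lemma \ref{T:Lipschitz-estimates} together with $d_p\le d_2$). Applying part (1) to $n^{1/2}$ times these functions (which are $1$-Lipschitz in $M$) and then rescaling $t\mapsto n^{1/2}t$ upgrades the rate $e^{-cnt^2}$ to $e^{-cn^2t^2}$, which gives parts (3) and (4).

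The main obstacle is organizing part (1) so that the Lipschitz constant of $F(M)$ in the $U$-variable — which is only $\E[\delta(A)]$, not $O(1)$ a priori — is controlled: this is precisely where the bound on $\E[\delta(A)]$ from Lemma \ref{T:op-norm-bounds} is needed, and where one must perform the $\Unitary{n}\to\SUnitary{n}$ reduction so that Proposition \ref{T:GM} applies (recall $\Unitary{n}$ itself lacks the required concentration property). Everything else is routine tracking of Lipschitz constants and elementary union bounds.
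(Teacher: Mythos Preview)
Your proposal is correct and follows essentially the same approach as the paper's proof: a three-term conditioning decomposition with union bound over events of size $t/3$, the $\Unitary{n}\to\SUnitary{n}$ reduction via the coupling of Lemma \ref{T:ident-distances}, control of the $V$-Lipschitz constant via $\E[\delta(A)]$ and Lemma \ref{T:op-norm-bounds}, and deduction of parts (2)--(4) from part (1) via Lemma \ref{T:Lipschitz-estimates}. The only cosmetic difference is the order of peeling: the paper conditions on $(A,V)$ first (isolating $B$), then on $V$ (isolating $A$), then handles $V$, whereas you condition on $(U,B)$ first (isolating $A$), then on $U$ (isolating $B$), then handle $U$; since $A$ and $B$ play symmetric roles in the hypotheses, this is immaterial.
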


\begin{proof}
  \begin{enumerate}
  \item 
    By the coupling described in the proof of 
    Lemma \ref{T:ident-distances}, we may equivalently define
    \[
    M = (\omega V) A (\omega V)^*  + B
    =  V A V^*  + B
    \]
    for $\omega$ and $V$ independent with $\omega$ uniformly distributed
    in $\Circle$ and $V$ Haar-distributed in $\SUnitary{n}$.  
    Now, 
    \begin{align*}
      \Prob\bigl[\abs{F(M) - \E F(M)} \ge t \bigr] \le &
      \E\left(\Prob\left[\left.\abs{F(M)-\E\left[F(M) \middle\vert
                A,V\right]} \ge \frac{t}{3}
          \right|A,V\right]\right)\\
      &+\E\left( \Prob\left[\left.\abs{\E\left[F(M) \middle\vert
                A,V\right] -\E\left[F(M) \middle\vert V\right]} \ge
            \frac{t}{3}
          \right|V\right]\right)\\
      &+\Prob\left[\abs{\E\left[F(M) \middle\vert V\right] -\E F(M)}
        \ge \frac{t}{3}\right].
      \end{align*}
    Conditional on $A$ and $V$, $F(M)$ is a $1$-Lipschitz function of
    $B$, and by independence, the distribution of $B$ is unchanged by
    conditioning on $A$ and $V$.  The conditional distribution of $B$
    therefore still satisfies the concentration hypothesis and so the
    first summand above is bounded as desired.  Similarly, conditional
    on $V$, $\E\left[F(M) \middle\vert A,V\right]$ is a $1$-Lipschitz
    function of $A$, and the bound on the second summand follows from
    independence and the concentration hypothesis for $A$.  By
    Corollary \ref{T:sub-lipschitz}, $M$ is $\delta(A)$-Lipschitz as a
    function of $V$; it follows that $\E\left[F(M) \middle\vert
      V\right]$ is an $\E[\delta(A)]$-Lipschitz function of $V$, and
    the claim then follows from Lemma \ref{T:op-norm-bounds} and
    Proposition \ref{T:GM}.
    
  \item This follows from the previous part and the fact that the
    operator norm is a 1-Lipschitz function with respect to the
    Hilbert-Schmidt norm on $\mathcal{M}_n^{sa}$.
    
  \item As a function of $\mu_M \in \mathcal{P}_1(\R)$, $X_f$ is
    $1$-Lipschitz by the duality between $d_1$ and $1$-Lipschitz
    functions on $\R$.  By Lemma \ref{T:Lipschitz-estimates}, $\mu_M$
    is $n^{-1/2}$-Lipschitz as a function of $M$, and so the claim
    follows from the first part.
    
  \item This also follows from the first part and Lemma
    \ref{T:Lipschitz-estimates}.
  \end{enumerate}
\end{proof}

\begin{thm}\label{T:AB-entropy}
  In the setting of Theorem \ref{T:AB}, there are constants
  $c,C,C',C''$ depending only on the concentration hypotheses for $A$
  and $B$, such that
  \[
  \E d_1(\mu_M, \mu) \le \frac{C(\E\norm{M}_{op})^{1/3}}{n^{2/3}}
  \le \frac{C'}{n^{2/3}},
  \]
  and so
  \[
  \Prob \left[ d_1(\mu_M, \mu) \ge \frac{C'}{n^{2/3}} + t \right] \le
  C''e^{ - c n^2 t^2}
  \]
  for $t > 0$.
\end{thm}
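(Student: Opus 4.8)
The plan is to follow the template of Theorem~\ref{T:compression-entropy} almost verbatim, since the ingredients are now in place: Theorem~\ref{T:AB} supplies the subgaussian increment estimate for the process $X_f = \int f\,d\mu_M - \int f\,d\mu$ with the correct rate $e^{-cn^2t^2}$ (part (3)), and the preceding remark observes that $\E\norm{M}_{op}$ is controlled in terms of the concentration constants for $A$ and $B$ via Weyl's inequalities and Lemma~\ref{T:op-norm-bounds}. First I would record that, by \eqref{D:distance_testfcns} and the fact that $X_f$ is unchanged under adding constants to $f$,
\[
d_1(\mu_M,\mu) = \sup\bigl\{ X_f : f \in B(\Lip_0(\R)) \bigr\},
\]
where $\Lip_0(\R) = \{f \in \Lip(\R) : f(0) = 0\}$.

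Next I would run the truncation step exactly as in Theorem~\ref{T:compression-entropy}: replace each $f$ by the compactly supported $1$-Lipschitz function $f_R$ agreeing with $f$ on $[-R,R]$ and dropping linearly to $0$, with $\abs{f-f_R}(x)\le\abs{x}$. Using part (2) of Theorem~\ref{T:AB} to control $\P[\norm{M}_{op}\ge t]$ and the bound $\E\mu_M((-\infty,t)\cup(t,\infty))\le n\,\P[\norm{M}_{op}\ge t]$, choosing $2R = \E\norm{M}_{op}+1$ makes
\[
\E\sup\bigl\{\abs{X_f - X_{f_R}} : f \in B(\Lip_0(\R))\bigr\} \le Cn\bigl(\E\norm{M}_{op}\bigr)e^{-cn},
\]
which is negligible. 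This reduces the indexing set to $\Lip_{1,\frac12(\E\norm{M}_{op}+1)}$.

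Then I would carry out the piecewise-linear discretization and Dudley bound as in Theorem~\ref{T:group-entropy}: fix $m$, approximate each compactly supported $1$-Lipschitz $f$ on an interval of length $O(\E\norm{M}_{op})$ by a function affine on $m$ subintervals, paying a sup-norm error $O(\E\norm{M}_{op}/m)$ (hence $O(\E\norm{M}_{op}/m)$ in the process); the approximating functions form a ball in an $O(m)$-dimensional normed space, so the covering numbers are at most $(3/\eps)^{O(m)}$. Feeding this into Dudley's entropy bound \eqref{E:group-Dudley} with the increment constant $1/n$ coming from part (3) of Theorem~\ref{T:AB} gives $\E\sup X_g \le C\sqrt{m}/n$, so
\[
\E d_1(\mu_M,\mu) \le C\frac{\sqrt m}{n} + C\frac{\E\norm{M}_{op}}{m};
\]
optimizing in $m$ (of order $(n\,\E\norm{M}_{op})^{2/3}$) yields $\E d_1(\mu_M,\mu)\le C(\E\norm{M}_{op})^{1/3}n^{-2/3}$, and the uniform bound $\E\norm{M}_{op}\le K$ gives the stated $C'/n^{2/3}$. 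The tail bound then follows immediately by applying part (4) of Theorem~\ref{T:AB} with $p=1$ and $\rho=\mu$, writing $d_1(\mu_M,\mu)\ge C'n^{-2/3}+t \Rightarrow d_1(\mu_M,\mu)-\E d_1(\mu_M,\mu)\ge t$.

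The routine calculations are entirely parallel to those already done, so the main (minor) obstacle is purely bookkeeping: one must make sure the truncation parameter $R$, which scales like $\E\norm{M}_{op}$ rather than the constant $2\pi$ of Theorem~\ref{T:group-entropy}, is tracked honestly through the entropy integral so that it emerges as the advertised factor $(\E\norm{M}_{op})^{1/3}$ rather than being absorbed — and then to note that it can in fact be absorbed into the constant at the end via $\E\norm{M}_{op}\le K$. No genuinely new idea is required beyond what appears in the proofs of Theorems~\ref{T:group-entropy} and~\ref{T:compression-entropy}.
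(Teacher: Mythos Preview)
Your proposal is correct and takes essentially the same approach as the paper: the paper's proof consists of the single sentence ``The proof is exactly the same as the proof of Theorem~\ref{T:compression-entropy},'' and what you have written is precisely that argument, with the inputs from Theorem~\ref{T:AB} substituted for those from Theorem~\ref{T:X-concentration}.
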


The proof is exactly the same as the proof of Theorem 
\ref{T:compression-entropy}.

\begin{cor}
  For each $n$, let $A_n, B_n \in \mathcal{M}_n^{sa}$ be fixed
  matrices with spectra bounded independently of $n$. Let $U_n \in
  \Unitary{n}$ be Haar-distributed. Let $M_n = UA_nU^* + B_n$ and let
  $\mu_n = \E \mu_{M_n}$. Then with probability $1$,
  \[
  d_1(\mu_{M_n}, \mu_n) \le C n^{-2/3}
  \]
  for all sufficiently large $n$, where $C$ depends only on the bounds
  on the sizes of the spectra of $A_n$ and $B_n$.
\end{cor}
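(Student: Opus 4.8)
The plan is to obtain this as an immediate consequence of Theorem \ref{T:AB-entropy}, via a Borel--Cantelli argument exactly parallel to Corollary \ref{T:group-BC} and to the corresponding corollary for random compressions. The first thing to check is that Theorem \ref{T:AB-entropy} applies here with constants that do not depend on $n$. Since $A_n$ and $B_n$ are deterministic, $F(A_n) = \E F(A_n)$ for every $F$, so the concentration hypothesis \eqref{E:Hermitian-concentration} holds trivially for $A_n$ and for $B_n$ — one may take, say, $C = c = 1$ — and these constants are independent of $n$. Moreover, by Weyl's inequalities \cite[Theorem III.2.1]{Bhatia} the spectrum of $M_n$ lies in $[\lambda_{\min}(A_n) + \lambda_{\min}(B_n), \lambda_{\max}(A_n) + \lambda_{\max}(B_n)]$, so $\norm{M_n}_{op} \le \norm{A_n}_{op} + \norm{B_n}_{op}$ is bounded by a constant depending only on the assumed bounds on the spectra of the $A_n$ and $B_n$; in particular $\E\norm{M_n}_{op}$ is bounded uniformly in $n$.

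With these inputs in hand, Theorem \ref{T:AB-entropy} yields constants $c, C', C'' > 0$, depending only on the bounds on the spectra, such that for every $t > 0$,
\[
\Prob\left[d_1(\mu_{M_n}, \mu_n) \ge \frac{C'}{n^{2/3}} + t\right] \le C'' e^{-c n^2 t^2}.
\]
Next I would choose $t = n^{-2/3}$, which gives
\[
\Prob\left[d_1(\mu_{M_n}, \mu_n) \ge \frac{C' + 1}{n^{2/3}}\right] \le C'' e^{-c n^{2/3}}.
\]
Since $\sum_n C'' e^{-c n^{2/3}} < \infty$, the Borel--Cantelli lemma shows that with probability $1$ the event $\{ d_1(\mu_{M_n}, \mu_n) \ge (C'+1) n^{-2/3} \}$ occurs for only finitely many $n$, which is precisely the assertion of the corollary with $C = C' + 1$.

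There is no real obstacle in this argument; the only point requiring any care is the uniformity in $n$ of the constants produced by Theorem \ref{T:AB-entropy}, and this is automatic here because both of its inputs — the concentration constants for the (deterministic) matrices $A_n$ and $B_n$, and the uniform bound on $\E\norm{M_n}_{op}$ coming from Weyl's inequalities — are themselves uniform in $n$.
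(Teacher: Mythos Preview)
Your proof is correct and follows exactly the approach of the paper, which simply says ``This follows from Theorem \ref{T:AB-entropy}, using $t = n^{-2/3}$ and the Borel--Cantelli lemma.'' Your additional care in verifying that the constants from Theorem \ref{T:AB-entropy} are uniform in $n$ (trivial concentration for deterministic matrices, and the spectral bound controlling $\E\norm{M_n}_{op}$) is appropriate and correct.
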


\begin{proof}
  This follows from Theorem \ref{T:AB-entropy}, using $t = n^{-2/3}$
  and the Borel--Cantelli lemma.
\end{proof}

\bigskip

\section*{Acknowledgements}
E.\ Meckes's research is partially supported by a Five-Year Fellowship
from the American Institute of Mathematics and NSF grant
DMS-0852898. M.\ Meckes's research is partially supported by NSF grant
DMS-0902203.  M.\ Meckes thanks the Mathematisches Forschungsinstitut
Oberwolfach, where part of this research was carried out.

\bibliographystyle{plain}
\bibliography{ccrsmrm}

\end{document}